\documentclass[pdflatex,sn-basic,Numbered]{sn-jnl}

\usepackage{multirow}%
\usepackage{amsmath,amssymb,amsfonts}%
\usepackage{amsthm}%
\usepackage{mathrsfs}%
\usepackage[title]{appendix}%
\usepackage{xcolor}%
\usepackage{textcomp}%
\usepackage{manyfoot}%
\usepackage{booktabs}%
\usepackage{algorithm}%
\usepackage{algorithmicx}%
\usepackage{algpseudocode}%
\usepackage{listings}%
\usepackage{comment}
\usepackage{graphicx} 
\usepackage{amsmath}
\usepackage{amsfonts}
\usepackage{times} 
\usepackage{eqnarray}
\usepackage{latexsym}
\usepackage{booktabs}
\usepackage{amsfonts}
\usepackage{xcolor}
\usepackage{mathtools}
\usepackage{enumitem}
\usepackage{tikz}
\usepackage{graphicx} 
\usepackage{float} 
\usetikzlibrary{shapes,arrows}
\newtheorem{thm}{Theorem}[section]

\newtheorem{lem}[thm]{Lemma}

\newtheorem{prop}[thm]{Proposition}
\theoremstyle{definition}
\newtheorem{defn}[thm]{Definition}
\newtheorem{rem}[thm]{Remark}

\newtheorem{assumption}[thm]{Assumption}
\usepackage{graphicx} 
\newcommand{\ddt}{\tfrac{\text{\normalfont d}}{\text{\normalfont d}t}}
\numberwithin{equation}{section}
\allowdisplaybreaks

\newcommand{\yref}{y_{\rm ref}}

\DeclareMathOperator{\dom}{dom}

\newcommand{\setdef}[2]{\left\{\, #1 \left|\, \vphantom{#1} #2\right.\right\}}

\renewcommand{\emptyset}{\varnothing}

\newcommand{\R}{\ensuremath{\mathbb R}}    
\newcommand{\N}{\ensuremath{\mathbb N}}    

\newcommand{\eps}{\varepsilon}

\newcommand{\AxisRotator}[1][rotate=0]{%
    \tikz [x=0.2cm,y=0.40cm,line width=.2ex,-stealth,#1] \draw (0,0) arc (-150:150:1 and 1);%
}

\theoremstyle{thmstyletwo}%
\newtheorem{remark}{Remark}%

\theoremstyle{thmstylethree}%

\newcommand{\red}[1]{{\color{red}#1}}

\begin{document}

\title{Analysis and funnel control for nonlinear drill strings}

\author[1]{\fnm{Thomas} \sur{Berger}}\email{thomas.berger@mathematik.uni-halle.de}
\equalcont{These authors contributed equally to this work.}

\author[2]{\fnm{Thavamani} \sur{Govindaraj}}\email{thavamani.maths@kongu.ac.in}

\author[1]{\fnm{Pushya} \sur{Mitra}}\email{pushya.mitra@mathematik.uni-halle.de}
\equalcont{These authors contributed equally to this work.}

\author[3]{\fnm{Timo}\sur{Reis}}\email{timo.reis@tu-ilmenau.de}
\equalcont{These authors contributed equally to this work.}

\affil*[1]{\orgdiv{Institut für Mathemtik}, \orgname{Martin-Luther-Universität Halle-Wittenberg}, \orgaddress{\street{Theodor-Lieser-Straße 5}, \postcode{06120} \city{Halle (Saale)},  \country{Germany}}}

\affil[2]{\orgdiv{Department of Mathematics}, \orgname{Kongu Engineering College}, \orgaddress{\street{Perundurai}, \postcode{638060} \city{Tamil Nadu}, \country{India}}}

\affil[3]{\orgdiv{Systems Theory and Partial Differential Equations Group}, \orgname{Technische Universität Ilmenau}, \orgaddress{\street{Weimarer Str. 25}, \postcode{98684} \city{Ilmenau}, \country{Germany}}}

\abstract{ We study the output tracking problem for a vertically driven drill string system described by a nonlinear boundary-coupled PDE-ODE model. Solvability analysis of the drill string model is achieved by first casting the model in an abstract boundary value problem involving set-valued operators on an appropriate Hilbert space. The governing equation here consists of evolution and the damping part. Existence of solutions is established within the framework of maximal monotone operators where one first proves that the evolution operator is a linear skew-adjoint operator and the  distributed damping term is a Nemytskii relation which is then proven to be maximal monotone. Maximal monotonicity of the combined operator is then a consequence of Rockafellar's theorem. Furthermore, we propose a novel funnel control design that ensures the angular velocity of the drill bit follows a dynamically adjusted reference trajectory, while the tracking error remains confined within a pre-specified performance funnel. The reference adjustment mechanism adapts in response to large wave traveling times that may cause performance degradation. The corresponding feasibility result is illustrated by some simulations.}

\keywords{Nonlinear PDE-ODE model, maximal monotone operator, wave equation, funnel control}

\maketitle

\section{Introduction}\label{sec1}

One major issue in industrial drilling is the stick-slip phenomenon, a form of torsional instability that degrades drilling performance, increases wear, and can cause catastrophic failures~\cite{tian2020research}. Research of this phenomenon began in the 1980s, with early studies identifying friction at the drill bit as a key factor. Over time, control mechanisms such as proportional-integral controllers and dynamic modeling have been developed to mitigate stick-slip effects, though practical applications remain challenging~\cite{TerAndVin2020}. To address this, an integrated vibration tool that combines drill string mechanics with operational constraints has been introduced, leading to more precise control over torque and angular velocity~\cite{tian2020research}. Experimental trials confirm its effectiveness in reducing stick-slip oscillations and improving drilling stability. Traditional mitigation strategies rely on torque feedback systems, but these require frequent recalibration and can sometimes amplify instability~\cite{sharma2024review}. There are alternative approaches that focus on controlling torsional wave interactions and treating the drill string as a system of interconnected pendula, using numerical models to simulate and manage vibrations~\cite{athanasiou2020simulation}. Despite progress, unpredictable drill string behavior remains a challenge, as small control parameter changes can destabilize drilling operations. Bifurcation analysis and delay differential equations have helped map stability regimes, offering insights into broader nonlinear dynamic systems~\cite{BALANOV2003}. Continued advancements in theoretical modeling, simulations, and real-world testing are crucial for improving drilling efficiency while minimizing failures. Moreover, insights from drill string dynamics have applications in various engineering fields, including power transmission and optical fiber communications~\cite{moharrami2021nonlinear}.

A drilling system, in general, consists of a flexible drill string, drill collars, a cutter at the bottom and a rotary table at the top of the drill string (see Fig.~\ref{fig.drill}). During the drilling process, the rotary table transmits torque through the drill string to drive the cutter. Owing to the inherent flexibility of the drill string, torsional, lateral, and axial vibrations may arise. The suppression of such vibrations is critical to ensure the efficiency, stability, and overall performance of the drilling system. In \cite{BALANOV2003}, torsional stability of a vertically driven drill string is addressed using numerical techniques. In \cite{CRUZNETO2023}, an optimal static output feedback controller has been proposed to suppress torsional vibrations of a nonlinear drill string model. The article \cite{SagMegKrsRou2013} addresses a stabilization problem of a drill string in oil well drilling systems. An optimal state-feedback control strategy for mitigating drill string rotational motion, based on a linearized system model, has been introduced in \cite{AndAhm2003}. Output regulation of a wave equation subject to two dynamic boundary conditions is considered in \cite{Chitour2023}, where a proportional-integral control is designed to achieve the control objective. Proportional–integral controllers for drill string models were likewise proposed in~\cite{TerAndVin2020}.

\begin{figure}[ht]
    \centering
    \includegraphics[width=0.5\linewidth]{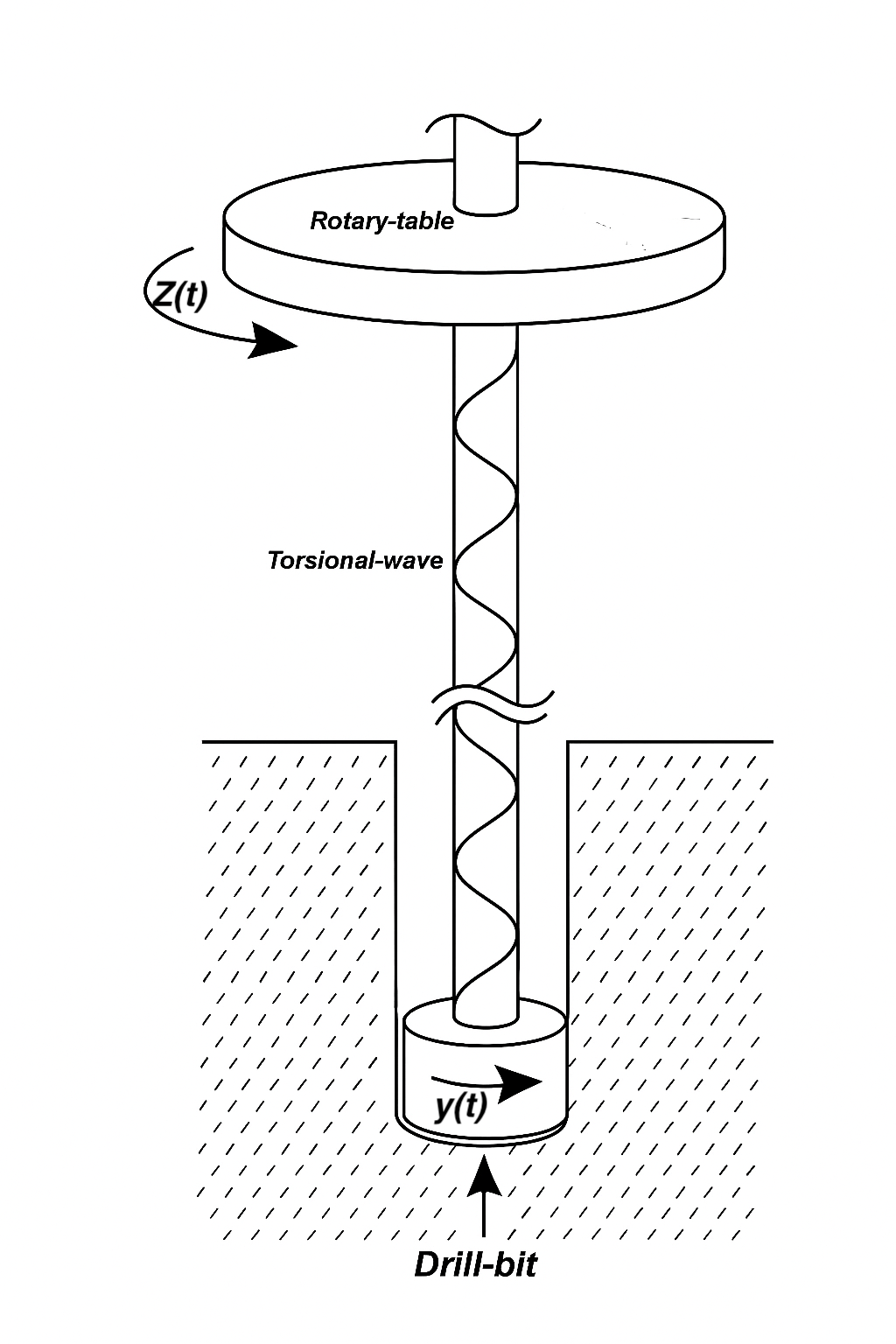}
    \caption{Drill-string depicting the torsional wave propagating upward as an effect of the collision of the bottom hole assembly (drill bit) against the surface. $z(t)$ depicts the (measured) angular velocity at the top and $y(t)$ depicts the (to be controlled) angular velocity at the bottom. }
    \label{fig.drill}
\end{figure}

In the present work, in order to enable a better mathematical treatment of stick-slip phenomena, we introduce a broad class of drill string models, where both the distributed damping and the damping at the cutting end are modeled as set-valued maps. Thus, the drill string can be mathematically modeled by a coupled system of nonlinear partial and ordinary differential inclusions. In the absence of axial and lateral vibrations, the torsional dynamics of the vertical drill string are described by  
\begin{equation}\label{drillstring}
\begin{aligned}
    \rho(\xi)\phi_{tt}(\xi,t)&\in(G(\xi)\phi_{\xi}(\xi,t))_{\xi}-F_d(\xi,\phi_t(\xi,t)),\quad 0<\xi<\ell,\ t>0,\\
    J\phi_{tt}(\ell,t)&\in-\Gamma G(\ell)\phi_{\xi}(\ell,t)-F_e(\phi_t(\ell,t)),\\
    G(0)\phi_{\xi}(0,t)&=u(t),\\
    \phi(\xi,0)&=\phi_0(\xi),\quad \phi_t(\xi,0)=v_0(\xi),\\
    y(t)&=\phi_t(\ell,t),\\
    z(t)&=\phi_t(0,t),
\end{aligned}
\end{equation}
where $\phi(\xi,t)$ denotes the rotary angle of the drill string at position $\xi$ and time $t$,  
$\rho(\xi)$ and $G(\xi)$ are the mass density and shear modulus of the string, respectively,  
$F_d(\xi,\cdot)$ is a (possibly set-valued) mapping describing the distributed damping torque at position $\xi$,  
$F_e$ is a (possibly set-valued) mapping describing the concentrated damping torque acting at the cutting end,  
$J$ is the rotary inertia at $\xi=\ell$,  
$\Gamma$ is the second moment of area at $\xi=\ell$,  
$u(t)$ is the applied torque control input at the top end $\xi=0$,  
$y(t)$ is the angular velocity at the bottom end $\xi=\ell$, which serves as the controlled output,  
$z(t)$ is the angular velocity at the top end $\xi=0$, which is taken as the measured output of the model,  
and $\phi_0(\xi)$ and $v_0(\xi)$ denote the initial rotary angle and angular velocity, respectively. 

The mappings $F_d$ and $F_e$ are allowed to be set-valued in order to capture nonlinear effects such as Coulomb or stick--slip friction~\cite{Rabinowicz1956}.  
In this case, they satisfy natural monotonicity conditions, for instance opposing the direction of motion. Clearly, the drill string model given by \eqref{drillstring} can be viewed as a coupled PDE-ODE model with dynamic boundary condition. Similar models  are also treated in~\cite{lhachemi:hal-05038849} where the authors treat the problem of control design for classes of parabolic and hyperbolic PDEs in the presence of input saturation.  Output-feedback control with the objective of stabilizing such systems was investigated in~\cite{deutscher2018output} and further developed in~\cite{Deutscher03102019}.  Problems involving dynamic boundary conditions, where the boundary dynamics are governed by additional evolution equations (Robin type boundary conditions), have also been  investigated for example in \cite{favini2001nonlinear} for evolution equations with second order differential operator and where the related boundary condition admits a maximal monotone graph. A practically relevant system is considered in~\cite{avalos2001well} where a structural acoustics control model with point observation of the pressure is investigated. The desired goal is then achieved via techniques from microlocal analysis and the theory of pseudodifferential operators. The work~\cite{graber2012existence} also deals with a similar mathematical problem as considered in the present paper. There the authors consider a strongly damped wave equation with nonlinear dynamic boundary conditions using techniques from nonlinear semigroup theory.




The aim of the present paper is twofold. First, we establish the existence of solutions for the nonlinear drill string model~\eqref{drillstring} in open-loop configuration using the theory of nonlinear evolution equations. Second, we develop a novel funnel control design that guarantees that in the closed-loop system the cutter's angular velocity follows a dynamically adjusted reference trajectory while remaining within a prescribed performance envelope. To this end, we impose slightly stronger assumptions, such as requiring the damping effects to be modeled by conventional (single-valued) functions.
The adjustment of the reference signal mitigates performance degradation caused by large wave traveling times and the corresponding delay in the effect of the input $u(t)$ on the controlled output $y(t)$. The main novelty compared to existing results lies in addressing a nonlinear drill string model and introducing a simple funnel control strategy that guarantees prescribed performance of the dynamically adjusted tracking error.  
The concept of \emph{funnel control} was introduced in~\cite{IlcRyaSan2002} for a class of first-order nonlinear systems described by functional-differential equations. Since then, it has been extended to higher-order systems~\cite{Berger2018,BerIlcRya2021} and to coupled PDE--ODE systems~\cite{Ber2020,BergPuch22}; see also the survey~\cite{BergIlch25} for an overview. Funnel control is attractive because of its simplicity and low complexity. Moreover, it requires neither knowledge of the system parameters nor access to the full system state, which makes it inherently robust. Successful applications of funnel control include chemical reactor models~\cite{IlcTre2004}, wind turbine systems~\cite{Hac2014}, industrial servo systems~\cite{Hack17}, underactuated multibody systems~\cite{BergDrue21,DrueLanz24}, and electrical circuits~\cite{BerRei2014}.

\subsection{Control Objective}\label{Ssec:ContrObj}

The objective is to design an output error feedback such that, for a given reference trajectory $y_{\text{ref}} \in  W^{1,\infty}(\R_{\ge 0},\R)$, in the resulting closed-loop system the tracking error  $y(t)-y_{\text{ref}}(t)$ exhibits a prescribed performance $|y(t)-y_{\text{ref}}(t)|<\psi(t)$ for $t\ge 0$, where the function $\psi\in W^{1,\infty}(\R_{\ge 0},\R)$ is such that $\inf_{t\ge 0} \psi(t) > 0$. Furthermore, all closed-loop signals should remain bounded. The function~$\psi$ is a controller design parameter and can be chosen so that, depending on the application, a desired transient and steady-state behavior is achieved. Therefore, a typical choice is $\psi(t) = a e^{-bt} + c$ for suitable $a,b,c>0$ selected to prescribe the maximum output tracking error at steady-state and the minimum convergence rate, respectively.

It is important to note that the controller to be designed only receives the information of the measured output $z(t)$ (in particular, none of the parameters or functions in the model~\eqref{drillstring} are known to the controller), which is the angular velocity at the top of the drill string. On the other hand, the objective is to achieve tracking for the to be controlled output $y(t)$,  which is the angular velocity at the bottom of the drill string. Since the control input is itself applied at the top of the string, it affects the output $y(t)$ only with a certain delay corresponding to the wave traveling time. Therefore, it impossible to achieve the control objective when $y_{\rm ref}$ and $\psi$ are arbitrarily prescribed and the initial conditions are unknown~-- the closed-loop system will exhibit blow-up of solutions in general~\cite{BergBika25}. To avoid this, the controller is appended by a dynamic adaptation mechanism for the reference signal which actively compensates for the delay between $u(t)$ and $y(t)$.

\subsection{Notation}
We denote by $\mathbb{R}$ the set of real numbers and $\mathbb{R}_{\geq 0}=[0, \infty)$. For a~subset $N$ of a~topological space, $\operatorname{int}N$ and $\overline{N}$ respectively stand for the interior and closure of $N$. For a Hilbert space $X$, we denote its inner product by $\langle \cdot,\cdot\rangle$, and for an interval $I \subset \mathbb{R}$, we denote by $L^p(I,X)$ the usual Lebesgue-Bochner space of Bochner measurable functions $f:I \to X$ with $\|f\|_X\in L^p(I,\R)$, where $1\leq p \leq \infty$. We denote by $W^{k,\infty}(I,X)$ the set of $k$-times weakly differentiable functions $f:I \to X$ such that $f,f',\ldots,f^{(k)} \in L^{\infty}(I, X)$ and by $C^k(I,X)$ the set of all $k$-times continuously differentiable functions $f:I\to X$, with $C(I,X):=C^0(I,X)$. 
For a (possibly nonlinear) operator $A$, we denote by $\operatorname{dom} (A)$ and $\mathcal{R}(A)$, the domain and the range of $A$, respectively. We denote by $\mathbb{P}(X)$ the power set of $X$. Given a domain $\Omega$, we denote by $C_0^\infty(\Omega,\R)$ the set of infinitely differentiable real-valued functions with compact support in $\Omega$.

\subsection{Organization of the article}
The paper is organized as follows.
In Section~\ref{Sec:Prelim}, we introduce the necessary background on operators and relations in Hilbert spaces and present new results on abstract boundary value problems involving set-valued operators. These results provide the theoretical foundation for the subsequent solvability analysis of the drill-string model, which is carried out in Section~\ref{Sec:wellpose}. Moreover, we present explicit solution formulas under some additional assumptions. Section~\ref{Sec:ContrDes} is devoted to the presentation of a novel funnel controller to achieve the control objective described in Subsection~\ref{Ssec:ContrObj}, together with a proof for its feasibility. Numerical simulations illustrate and validate the controller design in Section~\ref{Sec:Sim} and some conclusions are given in Section~\ref{Sec:Concl}.

\section{Multivalued maps and nonlinear evolution equations}\label{Sec:Prelim}
In this section, we recall basic results on maximal monotone operators. 
Throughout, let $X$ and $Y$ denote real Hilbert spaces. 
A \emph{relation} between $X$ and $Y$ is simply a subset $A \subset X \times Y$.  
For $x \in X$ we define the \emph{image} of $x$ under $A$ as
\[
    Ax := \setdef{y \in Y}{(x,y)\in A}.
\]
If $Ax \neq \emptyset$, then $x$ is said to belong to the \emph{domain} of $A$, and the union of all $Ax$ is the \emph{range} of $A$, i.e.,
\[\operatorname{dom} (A) := \setdef{x \in X}{Ax \neq \emptyset},\qquad
\mathcal{R}(A):=\bigcup_{x \in X} Ax.\]
If $\mathcal{R}(A) = Y$, then we call $A$ {\em surjective}. Furthermore, we call $A$ {\em injective}, if
\[
    \forall\, x_1, x_2\in X:\ \big( Ax_1 \cap Ax_2 \neq \emptyset\big)\ \implies\ x_1 = x_2.
\]
Every relation $A\subset X \times Y$ has an inverse, defined by the relation
\[
    A^{-1} = \setdef{(y,x)}{(x,y)\in A}\subset Y \times X.
\]
In the relational sense above, an ordinary (single-valued) function $f:X \to Y$ is simply a relation whose images are singletons. If, more generally, images $Ax$ may contain more than one element, then $A$ is called a \emph{multivalued operator} (or \emph{multivalued mapping}) from $X$ to $Y$, and we write 
$A:X \rightrightarrows Y$. Obviously, relations and multivalued operators describe the same object; the difference lies only in the chosen viewpoint or notation.
In analogy with convenient multivalued mappings, $A,B: X\rightrightarrows Y$ and $\lambda \in \R$, we define the scaled and summed relations as
\[
\lambda A = \setdef{(x,\lambda y)}{(x,y)\in A},\qquad
A+B = \setdef{(x,y+z)}{(x,y)\in A,\ (x,z)\in B}.
\]
\begin{defn}[(Maximal) monotone operators]
Let $X$ be a~real Hilbert space. A multivalued operator $A: X \rightrightarrows X$ is called \emph{monotone}, if
\[
\langle x_1 - x_2,\, y_1 - y_2 \rangle \ge 0 
    \quad \text{for all } (x_i,y_i) \in A,\ i=1,2.
\]
The operator $A$ is called \emph{maximal monotone}, if it is monotone and not properly contained in any other monotone multivalued operator.  
\end{defn}
We recall some classical results on maximal monotone operators. The next theorem is originally stated for Banach spaces; here we present a simplified version tailored to Hilbert spaces.

\begin{thm}[{Minty's theorem~\cite[Thm.~2.2]{Bar2010}}]\label{thm:Minty}
    Let $X$ be a~real Hilbert space, and let $A: X \rightrightarrows X$ be monotone.  
    Then $A$ is maximal monotone if, and only if, for some (and hence every) $\lambda>0$ we have that $A+\lambda I$ is surjective.
\end{thm}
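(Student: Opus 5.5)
The plan is to prove the two implications separately and then settle the ``some versus every $\lambda$'' clause. Throughout, the key observation is that for monotone $A$ the relation $(A+\lambda I)^{-1}$ is single-valued and $\tfrac1\lambda$-Lipschitz on its domain. Indeed, if $y_i\in Ax_i+\lambda x_i$ for $i=1,2$, then monotonicity gives
\[
\langle x_1-x_2,\,y_1-y_2\rangle\ \ge\ \lambda\|x_1-x_2\|^2,
\]
and Cauchy--Schwarz yields $\|x_1-x_2\|\le\tfrac1\lambda\|y_1-y_2\|$.

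\emph{Surjectivity implies maximality.} Let $A+\lambda I$ be surjective for some $\lambda>0$, and let $(x_0,y_0)$ be monotonically related to all of $A$, that is, $\langle x_0-x,\,y_0-y\rangle\ge 0$ for all $(x,y)\in A$. By surjectivity we pick $(x,y)\in A$ with $y+\lambda x=y_0+\lambda x_0$, so that $y_0-y=\lambda(x-x_0)$. Inserting this into the inequality gives $-\lambda\|x-x_0\|^2\ge 0$. Hence $x=x_0$, then $y=y_0$, and so $(x_0,y_0)\in A$. This shows $A$ is maximal. The argument works for any single $\lambda$.

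\emph{Maximality implies surjectivity.} This is the main obstacle. The plan is to reduce to $\lambda=1$ by scaling, since $\tfrac1\lambda A$ is maximal monotone whenever $A$ is. It then suffices to show that for each $y_0\in X$ there is $x_0$ with $y_0\in Ax_0+x_0$. After translating $A$ we may take $y_0=0$. I would use the Fitzpatrick/Debrunner--Flor style argument. The goal is to find $x_0$ with
\[
\langle x_0-x,\,-x_0-y\rangle\ge 0\quad\text{for all }(x,y)\in A,
\]
because maximality then gives $(x_0,-x_0)\in A$, i.e.\ $0\in Ax_0+x_0$.

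To obtain such an $x_0$, consider for each $(x,y)\in A$ the closed convex set
\[
K_{(x,y)}:=\setdef{w\in X}{\langle w-x,\,-w-y\rangle\ge 0}=\setdef{w}{\|w\|^2+\langle w,\,y-x\rangle-\langle x,y\rangle\le 0}.
\]
This set is a closed ball, hence weakly compact. The task is to show that the family $\{K_{(x,y)}\}_{(x,y)\in A}$ has the finite intersection property; weak compactness then gives a common point $x_0$. For finitely many pairs $(x_i,y_i)\in A$, I would consider the minimax problem on the simplex,
\[
\min_{w\in\operatorname{conv}\{x_i\}}\ \max_{\mu\in\Delta}\ \sum_i\mu_i\langle w-x_i,\,-w-y_i\rangle\cdot(-1).
\]
Applying von Neumann's minimax theorem together with monotonicity $\langle x_i-x_j,\,y_i-y_j\rangle\ge0$ to the point $w=\sum\mu_i x_i$ should give the finite intersection property. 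The expected quadratic calculation is
\[
\sum_{i,j}\mu_i\mu_j\langle x_j-x_i,\,y_i+x_j\rangle\le 0,
\]
obtained after symmetrization. An alternative first attempt, valid in Hilbert spaces, is to use the maximal monotone extension via Zorn's lemma applied to the single-valued nonexpansive map $(I+A)^{-1}$ combined with Kirszbraun's extension theorem. Specifically, extend the firmly nonexpansive Cayley transform $(A-I)(A+I)^{-1}$ to all of $X$, and recover a monotone extension of $A$ whose resolvent is everywhere defined. Maximality then forces this extension to coincide with $A$, giving surjectivity of $A+I$.

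Finally, ``some hence every'' follows by combining the two parts. Surjectivity for one $\lambda$ gives maximality by the first part, and maximality gives surjectivity for every $\lambda$ by the scaled second part.
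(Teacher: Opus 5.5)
The paper gives no proof of this statement. It quotes Barbu's theorem, which is stated for reflexive Banach spaces with the duality mapping in place of $I$, and specializes it to Hilbert spaces. Your self-contained Hilbert-space argument follows the classical Debrunner--Flor/Minty route, and its architecture is sound:
\begin{itemize}
\item The direction ``surjective $\Rightarrow$ maximal'' is complete.
\item The reductions to $\lambda=1$ (scaling) and $y_0=0$ (translation) are fine.
\item $K_{(x,y)}$ is indeed the closed ball with centre $\tfrac12(x-y)$ and radius $\tfrac12\|x+y\|$.
\item The finite intersection property, weak compactness and maximality together do yield $(x_0,-x_0)\in A$. Here $A\neq\emptyset$, since $\emptyset$ is not maximal.
\end{itemize}

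\textbf{Error in the key computation.} The one computation that carries the hard direction is wrong as written. The inequality
\[
\sum_{i,j}\mu_i\mu_j\langle x_j-x_i,\,y_i+x_j\rangle\le 0
\]
fails in general. Take $X=\R$, $A$ the zero operator, $(x_1,y_1)=(0,0)$, $(x_2,y_2)=(1,0)$ and $\mu=(\tfrac12,\tfrac12)$; the left-hand side equals $\tfrac14$. In general your expression exceeds the correct one by $\sum_j\mu_j\|x_j-w\|^2\ge 0$, where $w:=\sum_j\mu_jx_j$.

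What you need is $f(w,\mu):=\sum_i\mu_i\langle w-x_i,\,w+y_i\rangle$ evaluated at $w=\sum_j\mu_jx_j$. The second slot contains $w$, not $x_j$. Since $\sum_i\mu_i(w-x_i)=0$, the $w$-part drops out, and symmetrizing gives
\[
f\Bigl(\textstyle\sum_j\mu_jx_j,\mu\Bigr)=\sum_{i,j}\mu_i\mu_j\langle x_j-x_i,\,y_i\rangle=-\tfrac12\sum_{i,j}\mu_i\mu_j\langle x_i-x_j,\,y_i-y_j\rangle\le 0 .
\]

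Also, $f$ is quadratic (convex) in $w$ and affine in $\mu$, so you need the convex--concave minimax theorem (e.g.\ Sion's), not the bilinear one. With it, on $C=\operatorname{conv}\{x_1,\dots,x_n\}$ you get
\[
\min_{w\in C}\max_{\mu\in\Delta}f=\max_{\mu\in\Delta}\min_{w\in C}f\le 0,
\]
and any minimizer $w^*$ lies in $\bigcap_iK_{(x_i,y_i)}$. With this correction the proof is complete.

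\textbf{The Kirszbraun alternative.} This route is also valid, but should be stated precisely.
\begin{itemize}
\item The map to extend is the reflected resolvent $R:=2(I+A)^{-1}-I$, i.e.\ $x+y\mapsto x-y$ for $(x,y)\in A$. It is well defined and nonexpansive exactly because $A$ is monotone. It is nonexpansive, not firmly nonexpansive; the firmly nonexpansive map is $(I+A)^{-1}$.
\item Extending $(I+A)^{-1}$ merely as a nonexpansive map would lose monotonicity.
\item For multivalued $A$, the relational composition $(A-I)(A+I)^{-1}$ is not single-valued, so $R$ should be defined directly.
\end{itemize}
Extend $R$ by Kirszbraun--Valentine to a nonexpansive $\widetilde R$ on all of $X$, and set
\[
\widetilde A:=\setdef{\bigl(\tfrac12(z+\widetilde Rz),\tfrac12(z-\widetilde Rz)\bigr)}{z\in X}.
\]
Then $\widetilde A$ is a monotone extension of $A$ with $\mathcal R(I+\widetilde A)=X$, and maximality forces $\widetilde A=A$. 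No separate appeal to Zorn's lemma is needed.

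Both of your routes exploit genuinely Hilbertian tools (balls and Cauchy--Schwarz, or Kirszbraun). That is what makes them elementary compared with the cited reflexive-Banach-space result.
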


Next, we state a result providing a criterion for the maximal monotonicity of the sum of two maximal monotone operators. See~\cite{Rockafellar1970} for the original reference. 

\begin{thm}[Rockafellar's theorem~{\cite[Thm.~2.6]{Bar2010}}]\label{sumthm}
    Let $X$ be a~real Hilbert space, and let $A,B:X\rightrightarrows X$ be maximal monotone with
    \[\big(\operatorname{int} \operatorname{dom}( A)\big)\cap \operatorname{dom}(B)\neq \emptyset.\] 
    Then $A+B:X\rightrightarrows X$ is maximal monotone.
\end{thm}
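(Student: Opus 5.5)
The statement is Rockafellar's theorem. The plan is to prove it via Minty's theorem (Theorem~\ref{thm:Minty}) combined with a Yosida regularization of $B$. Sums are clearly monotone, so it suffices to show that for every $y\in X$ the inclusion $y\in x+Ax+Bx$ has a solution.

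\textbf{Normalization.} First I normalize. Pick $x_0\in\operatorname{int}\operatorname{dom}(A)\cap\operatorname{dom}(B)$, $a_0\in Ax_0$ and $b_0\in Bx_0$. Translating $x\mapsto x-x_0$ and replacing $A,B$ by $A-a_0$, $B-b_0$ preserves maximal monotonicity and the surjectivity question. So I may assume $0\in\operatorname{int}\operatorname{dom}(A)$, $0\in A0$ and $0\in B0$.

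\textbf{Regularized problems.} For $\lambda>0$ let $J_\lambda=(I+\lambda B)^{-1}$ be the resolvent, which is single-valued and nonexpansive on all of $X$ by Minty's theorem. Let $B_\lambda=\lambda^{-1}(I-J_\lambda)$ be the Yosida approximation. It is monotone, $\lambda^{-1}$-Lipschitz, everywhere defined, and satisfies $B_\lambda x\in BJ_\lambda x$.

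I first show that $A+B_\lambda$ is maximal monotone. Solving $y\in x+Ax+B_\lambda x$ is equivalent to the fixed-point problem $x=(I+\mu A)^{-1}\bigl(x-\mu(x+B_\lambda x-y)+\mu\ldots\bigr)$, suitably arranged. More cleanly, it is equivalent to $x=(I+A)^{-1}(y-B_\lambda x)$. This map is a contraction only for small Lipschitz constants, so instead I use the standard rescaling $x=(I+\mu(I+A))^{-1}\bigl(x-\mu(B_\lambda x - y)\bigr)$-type iteration with $\mu$ small. This yields a strict contraction, since $x\mapsto x-\mu B_\lambda x$ is Lipschitz with constant close to $1$ and the resolvent of the strongly monotone operator $I+A$ contracts by $(1+\mu)^{-1}$. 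Hence there is a unique $x_\lambda$ and some $a_\lambda\in Ax_\lambda$ with
\[
y=x_\lambda+a_\lambda+B_\lambda x_\lambda .
\]

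\textbf{A priori bounds.} Testing with $x_\lambda$ and using $\langle a_\lambda,x_\lambda\rangle\ge0$ and $\langle B_\lambda x_\lambda,x_\lambda\rangle\ge0$ (both follow from $0\in A0$ and $B_\lambda0=0$) gives $\|x_\lambda\|\le\|y\|$. The key step is a uniform bound on $B_\lambda x_\lambda$, and this is where the interior condition enters.

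Maximal monotone operators are locally bounded at interior points of their domain. So there are $r,M>0$ such that every $v$ with $\|v\|\le r$ admits some $w_v\in Av$ with $\|w_v\|\le M$. Monotonicity gives $\langle a_\lambda-w_v,x_\lambda-v\rangle\ge0$, hence
\[
\langle a_\lambda,v\rangle\le\langle a_\lambda,x_\lambda\rangle+M(\|x_\lambda\|+r).
\]
Taking the supremum over $\|v\|\le r$ yields
\[
r\|a_\lambda\|\le\langle a_\lambda,x_\lambda\rangle+C .
\]
Substituting $a_\lambda=y-x_\lambda-B_\lambda x_\lambda$ and using $\langle B_\lambda x_\lambda,x_\lambda\rangle\ge0$ shows that $\langle a_\lambda,x_\lambda\rangle$ is bounded above. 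So $a_\lambda$ is bounded, and therefore so is $B_\lambda x_\lambda$, uniformly in $\lambda$.

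I expect this step, together with proving the local boundedness lemma, to be the main obstacle. The local boundedness lemma can be proved by a Baire category argument on the closed sets $\{v:\exists w\in Av,\ \|w\|\le n\}\cap\overline{B_r}$, using closedness of the graph. I would do that first.

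\textbf{Passage to the limit.} I then show that $(x_\lambda)$ is Cauchy as $\lambda,\mu\to0$. Subtract the two equations and test with $x_\lambda-x_\mu$. Monotonicity of $A$, together with $B_\lambda x_\lambda\in BJ_\lambda x_\lambda$ and $x_\lambda-J_\lambda x_\lambda=\lambda B_\lambda x_\lambda$, gives
\[
\|x_\lambda-x_\mu\|^2\le\langle B_\lambda x_\lambda-B_\mu x_\mu,\lambda B_\lambda x_\lambda-\mu B_\mu x_\mu\rangle\le C(\lambda+\mu).
\]
Hence $x_\lambda\to x$ and also $J_\lambda x_\lambda\to x$. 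Along a subsequence $B_\lambda x_\lambda\rightharpoonup b$ and $a_\lambda\rightharpoonup a$, with $a+b=y-x$. Graphs of maximal monotone operators are closed in the strong$\times$weak topology, which gives $b\in Bx$ and $a\in Ax$. Thus $y\in(I+A+B)x$, and Theorem~\ref{thm:Minty} concludes the proof.
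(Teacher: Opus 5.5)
The paper does not prove this statement; it quotes it from Barbu's monograph~\cite{Bar2010} and uses it as a black box. You instead reconstruct the standard Yosida-regularisation proof behind that citation, specialised to Hilbert spaces. You normalise at a point of $\operatorname{int}\dom(A)\cap\dom(B)$ and solve $y\in x_\lambda+Ax_\lambda+B_\lambda x_\lambda$. Local boundedness of $A$ at the normalising point then gives a $\lambda$-uniform bound on $a_\lambda$ and $B_\lambda x_\lambda$. Finally you pass to the limit via strong$\times$weak closedness of maximal monotone graphs and Minty's theorem. The bound $\|x_\lambda\|\le\|y\|$, the estimate $r\|a_\lambda\|\le\langle a_\lambda,x_\lambda\rangle+C$ with $\langle a_\lambda,x_\lambda\rangle\le\|y\|^2$, the Cauchy argument and the identification of the limit are all correct. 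Your route is self-contained and shows exactly where the interior hypothesis enters: only in the uniform bound on $a_\lambda$. The Hilbert structure also lets you get strong convergence from a direct Cauchy estimate instead of the weak-limit/$\limsup$ argument typical in reflexive Banach spaces. The citation buys generality and brevity.

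The one step that does not work as sketched is your proof of the local boundedness lemma. The sets $F_n=\{v\in\overline{B_r}:\exists\, w\in Av,\ \|w\|\le n\}$ are closed and cover $\overline{B_r}$ for small $r$, but they are not convex. So Baire only yields some $F_n\supset B_\delta(v_0)$ with $v_0$ possibly $\neq0$. That is not what you use. Taking the supremum over $v\in B_\delta(v_0)$ only gives $\langle a_\lambda,v_0\rangle+\delta\|a_\lambda\|\le\langle a_\lambda,x_\lambda\rangle+C$, where $\langle a_\lambda,v_0\rangle$ is uncontrolled. You cannot re-normalise at $v_0$ either, since $v_0$ need not lie in $\dom(B)$.

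A standard repair is to apply Baire to the closed \emph{convex} sublevel sets of
\[
\varphi(z):=\sup\setdef{\langle w,z-u\rangle}{w\in Au,\ \|u\|\le r}.
\]
This function is convex and lower semicontinuous. By monotonicity (compare with any $w_z\in Az$) it is finite on $B_r\subset\dom(A)$. Convexity, together with finiteness of $\varphi$ at $-\eps z_0$, moves the Baire ball $B_\eta(z_0)$ to a ball around $0$, so $\varphi\le M$ on $\overline{B_\delta(0)}$. Then $\delta\|w\|=\sup_{\|z\|\le\delta}\langle w,z\rangle\le M+\langle w,u\rangle$ gives $\|w\|\le 2M/\delta$ for all $w\in Au$ with $\|u\|\le\delta/2$. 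Alternatively, simply cite Rockafellar's local boundedness theorem for monotone operators.

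Two minor corrections. First, in the contraction step the Lipschitz bound for $I-\mu B_\lambda$ must use monotonicity of $B_\lambda$. This gives $\bigl(1+\mu^2\lambda^{-2}\bigr)^{1/2}$, which is below $1+\mu$ for small $\mu$; the triangle-inequality bound $1+\mu/\lambda$ would fail for $\lambda<1$. Second, testing the difference of the regularised equations actually yields $\|x_\lambda-x_\mu\|^2\le-\langle B_\lambda x_\lambda-B_\mu x_\mu,\lambda B_\lambda x_\lambda-\mu B_\mu x_\mu\rangle$. The sign slip is harmless, because you bound the right-hand side in absolute value by $C(\lambda+\mu)$.
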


We recall that maximal monotonicity imposes a simple geometric structure on
the images of individual elements under the operator.

\begin{lem}[{Images under maximal monotone operators \cite[Prop.~20.36]{BauschkeCombettes2017}}]\label{lem:mm-values-closed-convex}
Let $X$ be a real Hilbert space and let $A:X\rightrightarrows X$ be maximal
monotone. Then, for every $x\in\dom(A)$, the image $Ax$ of~$x$ under~$A$ is a nonempty closed
convex subset of $X$.
\end{lem}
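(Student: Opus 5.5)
The claim is that $Ax$ is nonempty, closed and convex for $x\in\dom(A)$. Nonemptiness is just the definition of $\dom(A)$. For closedness and convexity, I would describe $Ax$ as an intersection of closed half-spaces, with maximality supplying the key inclusion.

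Fix $x\in\dom(A)$. For each pair $(x',y')\in A$ define
\[
H_{(x',y')}:=\setdef{y\in X}{\langle x-x',\,y-y'\rangle\ge 0}.
\]
Each $H_{(x',y')}$ is either a closed half-space or all of $X$, the latter when $x'=x$. In either case it is closed and convex. The plan is to show
\[
Ax=\bigcap_{(x',y')\in A} H_{(x',y')},
\]
since an intersection of closed convex sets is closed and convex.

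The inclusion ``$\subseteq$'' follows directly from monotonicity of $A$: if $y\in Ax$, then $(x,y)\in A$, so $\langle x-x',y-y'\rangle\ge0$ for every $(x',y')\in A$.

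The reverse inclusion is the only step that uses maximality. Let $y$ lie in every $H_{(x',y')}$, and consider $\tilde A:=A\cup\{(x,y)\}$. This set is still monotone. Pairs of elements of $A$ satisfy the monotonicity inequality by assumption. A pair involving $(x,y)$ and some $(x',y')\in A$ satisfies it because $y\in H_{(x',y')}$. The pair $(x,y)$ with itself gives $0\ge0$. Since $A\subseteq\tilde A$ and $A$ is maximal monotone, $\tilde A=A$, so $(x,y)\in A$, i.e.\ $y\in Ax$.

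None of these steps is a real obstacle; the only idea needed is the half-space representation. As a cross-check, one could also use Theorem~\ref{thm:Minty}: $(I+A)^{-1}$ is a single-valued nonexpansive map on $X$. However, the direct argument above is shorter and does not need Minty's theorem at all.
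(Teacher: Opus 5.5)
Your proof is correct: representing $Ax$ as $\bigcap_{(x',y')\in A}\{y\in X \mid \langle x-x',y-y'\rangle\ge 0\}$ is the standard argument behind the cited result (Bauschke--Combettes, Prop.~20.36). Monotonicity gives one inclusion, and maximality, via adjoining $(x,y)$ to $A$, gives the other. The paper itself does not reprove the lemma; it only quotes that reference.
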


\subsection{Non-autonomous evolution equations}

Next we consider the non-autonomous abstract Cauchy problem
\begin{equation}\label{ACP-t}
    \begin{aligned}
        \dot{x}(t) &\in - A_t(x(t)), \quad t\in[0,T],\\
        x(0) &= x_0.
    \end{aligned}
\end{equation}
Throughout, we assume that $A_t:X\rightrightarrows X$ is maximal monotone for each $t\in[0,T]$. We also fix an initial value $x_0\in X$, which will be needed only later for the existence results and not for the definition of the solution concepts themselves.

\begin{defn}[Strong and generalized solutions, Evans~\cite{Evans1977}]\label{def:sol}
Let $X$ be a real Hilbert space, $T>0$, 
and let $A_t:X\rightrightarrows X$, $t\in[0,T]$.
\begin{enumerate}[label=(\alph*),ref=(\alph*)]
    \item (\emph{Strong solution})  
    A function $x\in W^{1,1}([0,T],X)$ is called a strong solution of~\eqref{ACP-t}, if 
    \[
        x(0)=x_0\in\overline{\dom(A_0)} \quad\text{and}\quad
        \dot{x}(t)\in -A_t(x(t))\ \text{for a.e. } t\in[0,T].
    \]
    \item (\emph{Generalized solution})  
    A function $x\in C([0,T],X)$ is called a generalized solution of~\eqref{ACP-t}, if there exists a sequence of partitions
    \[
      \mathcal{P}^n:\ 0=t_0^n<t_1^n<\dots<t_{m(n)}^n=T,\qquad \max_k\, (t_k^n-t_{k-1}^n)\xrightarrow[n\to\infty]{}0,
    \]
    and, for all $n\in\N$, a~sequence $(x_k^n)_{k=0}^{m(n)}\subset X$ which satisfies
    \begin{equation}
      x_0^n=x_0\quad \text{and}\quad \frac{x_k^n-x_{k-1}^n}{t_k^n-t_{k-1}^n} \in -A_{t_k^n}(x_k^n)\quad \text{for all}\ k=1,\ldots,m(n),
    \label{eq:discsscheme}
    \end{equation}
    such that the associated step functions
    \[
      x^n(t)=
      \begin{cases}
        x_0^n,& t=0,\\[2mm]
        x_k^n,& t\in (t_{k-1}^n,t_k^n],\ k=1,\ldots,m(n)
      \end{cases}
    \]
    satisfy that $(x^n)$ converges uniformly to $x$ in $C([0,T],X)$ as $n\to\infty$. 
\end{enumerate}
\end{defn}

In the case where $A_t:X\rightrightarrows X$ is maximal monotone, Minty's theorem yields that for all $\lambda>0$, $A_t+\lambda I:X\rightrightarrows X$ is surjective, i.e.,
\[
    \dom (A_t+\lambda I)^{-1} = \mathcal{R}(A_t+\lambda I) = X.
\]
Since monotonicity further yields that $A_t+\lambda I$ is injective (by a straightforward argument), we obtain that the inverse $(A_t+\lambda I)^{-1}$ is single-valued: Let $x\in X$ and $y,z\in (A_t+\lambda I)^{-1} x$, which means that $(y,x), (z,x)\in (A_t+\lambda I)$, i.e., $x\in (A_t+\lambda I)y \cap (A_t+\lambda I)z$, which by injectivity implies $y=z$. Therefore, $(A_t+\lambda I)^{-1}:X\to X$ is a single-valued operator. Note that $(I+\lambda A_t)^{-1} = \lambda^{-1} (\lambda^{-1} I + A_t)^{-1}$ for any $\lambda>0$, thus the {\em Yosida approximation} 
\[A_{t,\lambda}:=\lambda^{-1}\big(I-(I+\lambda A_t)^{-1}\big):X\to X\]
is a well-defined single-valued operator. We use this concept in the following, where we formulate assumptions for the existence of solutions to Cauchy problems of type~\eqref{ACP-t}.

\begin{assumption}[Regularity conditions for $t\mapsto A_t$]\label{ass:barbu-evans}
We consider two optional assumptions on the variation of the Yosida approximation $A_{t,\lambda}$ of $A_t$, $t\in[0,T]$, $\lambda>0$.
\begin{enumerate}[label=(\alph*),ref=(\alph*)]
    \item\label{ass:barbu-evansa}
    There exists $C>0$ such that
    \begin{equation}\label{eq:Atlamstrong}
      \|A_{t,\lambda} z-A_{s,\lambda} z\|
      \le C\, |t-s|\,\big(\|A_{t,\lambda} z\|+\|z\|+1\big) \quad\forall\, z\in X, \forall\,\lambda>0,\text{ and almost all }s,t\in[0,T].
    \end{equation}
    \item\label{ass:barbu-evansb} 
    There exist $\lambda_0>0$, $h\in L^1([0,T],X)$, and non-decreasing and continuous $L:\R_{\ge0}\to \R_{\ge0}$ such that
    \begin{equation}
          \|A_{t,\lambda} z-A_{s,\lambda} z\|
      \le  \|h(t)-h(s)\|\, L(\|z\|) \quad
      \forall\, z\in X, \forall\,\lambda\in(0,\lambda_0],\text{ and almost all }s,t\in[0,T].\label{eq:Atlamgen}
    \end{equation}
\end{enumerate}
\end{assumption}

\begin{thm}[Existence of strong and generalized solutions]\label{thm:barbu-evans}
Let $X$ be a real Hilbert space, $T>0$, and $A_t:X\rightrightarrows X$ be maximal monotone for all $t\in[0,T]$.
\begin{enumerate}[label=(\alph*),ref=(\alph*)]
    \item\label{thm:barbu-evansa} (\emph{Strong solutions, {\sc Barbu}~\cite[Thm.~4.19]{Bar2010}})  
    Under Assumption~\ref{ass:barbu-evans}\,\ref{ass:barbu-evansa}, 
    there exists a unique strong solution 
    $x\in W^{1,1}([0,T],X)$ of~\eqref{ACP-t} for every $x_0\in {\dom(A_0)}$.  
    \item\label{thm:barbu-evansb} (\emph{Generalized solutions, {\sc Evans}~\cite[Thm.~1]{Evans1977}}) 
    Under Assumption~\ref{ass:barbu-evans}\,\ref{ass:barbu-evansb}, 
    there exists a unique generalized solution 
    $x\in C([0,T],X)$ of~\eqref{ACP-t} for every $x_0\in \overline{\dom(A_0)}$.
    \item  
    If both sets of assumptions hold, then the generalized and strong solutions coincide, see~\cite[Thm.~4]{Evans1977}.
\end{enumerate}
\end{thm}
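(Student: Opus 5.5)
This theorem collects known results, so the plan is to reduce each part to the cited source and to check that our hypotheses match the ones used there; no new analysis is needed beyond this bookkeeping.

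For part~\ref{thm:barbu-evansa}, we work in the Hilbert space $X$. Since $A_t$ is maximal monotone, Theorem~\ref{thm:Minty} shows that the resolvents $(I+\lambda A_t)^{-1}$ are everywhere defined, single-valued and nonexpansive. Hence the Yosida approximations $A_{t,\lambda}$ are single-valued, Lipschitz with constant $1/\lambda$, and monotone.

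The argument of Barbu~\cite[Thm.~4.19]{Bar2010} then runs in four steps:
\begin{enumerate}[label=(\roman*)]
\item Solve the regularized equation $\dot x_\lambda = -A_{t,\lambda}x_\lambda$, $x_\lambda(0)=x_0$, by Picard iteration. The $t$-dependence is handled by the Lipschitz-in-time estimate~\eqref{eq:Atlamstrong}.
\item Obtain a bound on $\|\dot x_\lambda\|$ that is uniform in $\lambda$. Differentiate formally (or take difference quotients) and use monotonicity together with~\eqref{eq:Atlamstrong}. A Gronwall argument then bounds $\|A_{t,\lambda}x_\lambda(t)\|$ in terms of $\|A^0_0x_0\|$, the minimal-norm element of $A_0x_0$; this is finite because $x_0\in\dom(A_0)$.
\item Show that $(x_\lambda)$ is Cauchy in $C([0,T],X)$. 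Use monotonicity of $A_t$ on the resolvent points together with the identity $x = (I+\lambda A_t)^{-1}x + \lambda A_{t,\lambda}x$.
\item Pass to the limit using demiclosedness of maximal monotone graphs.
\end{enumerate}
Uniqueness follows by taking the inner product of the difference of two strong solutions with itself and using monotonicity. This gives $\frac{d}{dt}\|x-\tilde x\|^2\le 0$.

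For part~\ref{thm:barbu-evansb} we follow Evans~\cite{Evans1977}. The backward Euler scheme~\eqref{eq:discsscheme} is well posed because each step amounts to $x_k^n = (I+h_kA_{t_k})^{-1}x_{k-1}^n$, which is uniquely solvable by Minty's theorem. The key estimate compares two discrete solutions built on different partitions, in the Crandall--Liggett/Kobayashi style. Evans's condition~\eqref{eq:Atlamgen} controls the time variation of the resolvents by $\|h(t)-h(s)\|L(\|z\|)$. This yields convergence of the step functions uniformly on $[0,T]$, first for $x_0\in\dom(A_0)$. We then extend to $x_0\in\overline{\dom(A_0)}$ using the fact that the scheme is nonexpansive with respect to initial data. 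Uniqueness of the limit follows from the same comparison estimate.

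For part~(c), we note that under Assumption~\ref{ass:barbu-evans}\,\ref{ass:barbu-evansa} a strong solution satisfies the integral (or discrete) comparison inequality against every discrete approximation. Hence the uniformly convergent Euler approximants must converge to it, which is Evans~\cite[Thm.~4]{Evans1977}.

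The main obstacle is the $\lambda$-uniform derivative bound in step~(ii) of part~\ref{thm:barbu-evansa}, which is where~\eqref{eq:Atlamstrong} is essential. In the paper, however, we only need to verify that our Hilbert-space setting and hypotheses coincide with those of the cited theorems; for the proofs themselves we refer to them.
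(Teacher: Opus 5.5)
Your proposal takes the same approach as the paper. The paper gives no proof of this theorem: it assembles it from Barbu's Theorem~4.19 (strong solutions via Yosida regularization under the Kato-type condition~\eqref{eq:Atlamstrong}) and Evans's Theorems~1 and~4 (generalized solutions via the implicit Euler scheme, and their coincidence with strong solutions). Your reduction of each part to the cited source is therefore exactly what the paper does, and your sketches of the underlying arguments agree with those standard proofs but go beyond what the paper provides.
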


\begin{rem}[Inhomogeneous terms]\label{rem:inhomogeneities}
Inhomogeneous right-hand sides in the abstract Cauchy problem~\eqref{ACP-t} can be absorbed into the operators.
More precisely, given a maximal monotone family $\widetilde A_t:X\rightrightarrows X$ and a function
$f:[0,T]\to X$, define
\[
A_t x := \widetilde A_t x + f(t), \qquad t\in[0,T].
\]
Then \eqref{ACP-t} takes the form
\[
\dot{x}(t) \in -\widetilde A_t(x(t)) - f(t),
\qquad x(0)=x_0,
\]
and the addition of the inhomogeneous term $f(t)$ does not affect maximal monotonicity.
We further note that in~\cite{Evans1977} the inhomogeneous term is treated explicitly.
In particular, the discretization scheme in the definition of generalized solutions
takes the form
\[
\frac{x_k^n-x_{k-1}^n}{t_k^n-t_{k-1}^n}
\in -\widetilde A_{t_k^n}(x_k^n)+f_k^n,
\qquad x_0^n=x_0,
\]
where the step functions
\[
f^n(t):=
\begin{cases}
f_0^n, & t=0,\\[2mm]
f_k^n, & t\in (t_{k-1}^n,t_k^n],
\end{cases}
\]
are assumed to converge to $f$ in $L^1([0,T],X)$.\\
In the proof of the existence result in~\cite[Thm.~1]{Evans1977}, however, the specific choice
$f_k^n=f(t_k^n)$ is used throughout. Our discretization scheme therefore coincides with the one
employed in that proof, even when the inhomogeneous term is incorporated into the operator.
Nevertheless, the particular choice of the step functions $f^n$ is inessential,
as long as $f^n\to f$ in $L^1([0,T],X)$. By uniqueness of generalized solutions, all such choices
lead to the same limit.\\
As a consequence, the discretization scheme~\eqref{eq:discsscheme} may equivalently be replaced by
\[
\frac{x_k^n-x_{k-1}^n}{t_k^n-t_{k-1}^n}
\in -A_{t_k^n}(x_k^n)+g_k^n,
\qquad x_0^n=x_0,
\]
where the coefficients $g_k^n$ define a step function that converges to zero in $L^1([0,T],X)$.
\end{rem}

Next we consider an operator function which appears in the context of the drill-string model.

\begin{lem}\label{lem:Atlam-translation-unbdd}
Let $X$ be a real Hilbert space. Let $D:\dom(D)\subset X\to X$ be linear and skew-adjoint and
$B:X\rightrightarrows X$ be maximal monotone with $0\in\operatorname{int}\dom(B)$.
Let $f:[0,T]\to X$ and define, for $t\in[0,T]$,
\[
A_t:\dom(D) \cap \dom(B)\rightrightarrows X,\qquad
A_t z := D z + B(z)+f(t).
\]
Then $A_t$ is maximal monotone for all $t\in[0,T]$. Moreover, the following holds for the Yosida approximation
$A_{t,\lambda}$ of $A_t$, for
all $z\in X$, all $\lambda>0$, and almost all $s,t\in[0,T]$:
\begin{equation}\label{eq:Atlam-translation-Lip}
\|A_{t,\lambda}z-A_{s,\lambda}z\|
\le \|f(t)-f(s)\|.
\end{equation}
In particular:
\begin{enumerate}[label=(\alph*),ref=(\alph*)]
\item\label{lem:Atlam-translation-unbdda} If $f\in W^{1,\infty}([0,T],X)$, then $t\mapsto A_t$ fulfills
Assumption~\ref{ass:barbu-evans}\,\ref{ass:barbu-evansa} with
\[
C:=\|\dot f\|_{L^\infty(0,T;X)}.
\]
\item\label{lem:Atlam-translation-unbddb} If $f\in L^1([0,T],X)$, then $t\mapsto A_t$ fulfills
Assumption~\ref{ass:barbu-evans}\,\ref{ass:barbu-evansb} with $\lambda_0=1$,
$L(\cdot)\equiv 1$, and $h=f$.
\end{enumerate}
\end{lem}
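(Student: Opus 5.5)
First I will show that $A_t$ is maximal monotone, and then derive the Lipschitz-type estimate~\eqref{eq:Atlam-translation-Lip} from a translation identity for resolvents.

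\textbf{Maximal monotonicity.} A linear skew-adjoint operator $D$ satisfies $\langle Dz_1-Dz_2,z_1-z_2\rangle=0$, so it is monotone. For any $\lambda>0$, the operator $\lambda I + D$ is surjective because $\pm\lambda\in\rho(D)$: skew-adjoint operators have spectrum on the imaginary axis. By Minty's theorem (Theorem~\ref{thm:Minty}), $D$ is therefore maximal monotone. Since $D$ is densely defined, $\dom(D)$ is dense. The hypothesis $0\in\operatorname{int}\dom(B)$ then gives a ball $B_r(0)\subset\dom(B)$, and by density this ball meets $\dom(D)$. Hence
\[
(\operatorname{int}\dom(B))\cap\dom(D)\neq\emptyset,
\]
and Rockafellar's theorem (Theorem~\ref{sumthm}), applied with the roles $A:=B$, $B:=D$, shows that $D+B$ is maximal monotone. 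Adding the constant vector $f(t)$ preserves monotonicity, since the differences cancel. It also preserves surjectivity of $\lambda I+\cdot$, by shifting the right-hand side. Thus $A_t$ is maximal monotone for every $t$.

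\textbf{Translation identity for resolvents.} Let $M:=D+B$ and $c:=f(t)$. For $x\in X$, the resolvent $y=(I+\lambda(M+c))^{-1}x$ is characterised by $x\in y+\lambda My+\lambda c$. Equivalently, $y=(I+\lambda M)^{-1}(x-\lambda c)$. Writing $J_\lambda$ for the resolvent of $M$ and $M_\lambda$ for its Yosida approximation, this gives
\[
A_{t,\lambda}z=\lambda^{-1}\bigl(z-J_\lambda(z-\lambda f(t))\bigr)
= M_\lambda\bigl(z-\lambda f(t)\bigr)+f(t).
\]
The second equality follows by adding and subtracting $\lambda f(t)$ inside the bracket.

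\textbf{The estimate.} Since $M$ is maximal monotone, $J_\lambda$ is nonexpansive (firmly nonexpansive). Setting $u:=z-\lambda f(t)$ and $v:=z-\lambda f(s)$, we have
\[
A_{t,\lambda}z-A_{s,\lambda}z=\lambda^{-1}\bigl(J_\lambda v-J_\lambda u\bigr).
\]
Indeed, the terms $\lambda^{-1}z$ cancel. Nonexpansiveness then yields
\[
\|A_{t,\lambda}z-A_{s,\lambda}z\|\le\lambda^{-1}\|u-v\|=\|f(t)-f(s)\|,
\]
which is~\eqref{eq:Atlam-translation-Lip}.

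\textbf{Consequences.} For (a), $f\in W^{1,\infty}$ is Lipschitz, with $\|f(t)-f(s)\|\le\|\dot f\|_{L^\infty}|t-s|$ for almost all $s,t$. The right-hand side is bounded by $C|t-s|(\|A_{t,\lambda}z\|+\|z\|+1)$, so Assumption~\ref{ass:barbu-evans}\,\ref{ass:barbu-evansa} holds. For (b), the estimate is exactly~\eqref{eq:Atlamgen} with $h=f$, $L\equiv1$ and any $\lambda_0$, in particular $\lambda_0=1$.

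\textbf{Main obstacle.} I expect the delicate step to be verifying the interior condition in Rockafellar's theorem. The plan relies on $D$ being densely defined, which is part of the standard definition of skew-adjoint, and on choosing the correct assignment of roles in Theorem~\ref{sumthm}. A secondary technical point is that the maximal monotonicity of $D$ requires a short spectral argument rather than following directly from the definition.
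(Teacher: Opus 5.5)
Your proof is correct and is essentially the paper's argument. The paper subtracts the two resolvent equations $z-x_t=\lambda(Dx_t+b_t+f(t))$, pairs the difference with $x_t-x_s$, and uses $\langle Dw,w\rangle=0$ together with monotonicity of $B$ to get $\|x_t-x_s\|^2\le\lambda\langle f(s)-f(t),x_t-x_s\rangle$; this is exactly the firm nonexpansiveness of $(I+\lambda(D+B))^{-1}$ that you invoke after your translation identity, while the paper simply attributes maximal monotonicity to Rockafellar's theorem without the details you supply. The step you flag as delicate is actually trivial: $\dom(D)$ is a linear subspace, so $0\in\dom(D)\cap\operatorname{int}\dom(B)$ and no density argument is needed.
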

\begin{proof} Maximal monotonicity of $A_t$ is a direct consequence of Rockafellar's theorem (see Theorem~\ref{sumthm}). 
To prove~\eqref{eq:Atlam-translation-Lip}, let $z\in X$, $\lambda>0$, and set
$x_t:=J_{t,\lambda}z:=(I+\lambda A_t)^{-1}z\in\mathcal{R}((I+\lambda A_t)^{-1}) = \dom(A_t)=\dom(D)\cap\dom(B)$, $t\in[0,T]$.
Then there exist $b_t\in B(x_t)$ such that
\[
z-x_t=\lambda\Bigl(Dx_t+b_t+f(t)\Bigr)
\qquad\text{for all }t\in[0,T].
\]
Let $s,t\in[0,T]$. Then the latter relation gives
\[
x_t-x_s
=\lambda\Bigl(D(x_s-x_t)+(b_s-b_t)
+f(s)-f(t)\Bigr).
\]
Taking the inner product with $x_t-x_s$ and using that, by skew-adjointness of $D$, $\langle x_t-x_s,D(x_t-x_s)\rangle=0$, we obtain
\[\|x_t-x_s\|^2
=\lambda\langle b_s-b_t,x_t-x_s\rangle +\lambda\langle f(s)-f(t),\,x_t-x_s\rangle.\]
Monotonicity of $B$ gives
$\langle b_s-b_t,\,x_t-x_s\rangle\le0$, and thus
\[
\|x_t-x_s\|^2
\le
\lambda\langle f(s)-f(t),\,x_t-x_s\rangle.
\]
The Cauchy--Schwarz inequality now yields $\|x_t-x_s\|
\le \lambda \|f(t)-f(s)\|$.
Recalling that $A_{t,\lambda}=\frac1\lambda(I-J_{t,\lambda})$, we conclude that
\[
\|A_{t,\lambda}z-A_{s,\lambda}z\|
=\frac1\lambda\|x_t-x_s\|
\le \|f(t)-f(s)\|.
\]
This proves \eqref{eq:Atlam-translation-Lip}. 
The statement \emph{(a)} follows immediately from $\|f(t)-f(s)\|\le \|\dot f\|_{L^\infty}|t-s|$, if $f\in W^{1,\infty}([0,T],X)$, whereas \emph{(b)} can be directly concluded from $f\in L^1([0,T],X)$.
\end{proof}
\subsection{Nonlinear abstract boundary control systems}\label{abstract boundary control}

We now consider a class of abstract differential equations that provides a theoretical framework for the drill string model~\eqref{drillstring}. To this end, let $X$ be a real Hilbert space, let $\mathfrak{A}:X\rightrightarrows X$, let $\mathfrak{p}:\dom(\mathfrak{A})\to \R^m$ be linear and surjective, and let $u\in W^{1,1}([0,T],\R^m)$. Then we study
\begin{equation}\label{eq:bndcont}
\begin{aligned}
       \dot{x}(t)&\in -\mathfrak{A}x(t), &
       \mathfrak{p}x(t)&=u(t),&& t\in[0,T],\\
       x(0)&=x_0\in \dom(\mathfrak{A}).
\end{aligned} 
\end{equation}
Note that this system can be written as a non-autonomous abstract Cauchy problem~\eqref{ACP-t} with time-dependent operators $A_t:X\rightrightarrows X$ given by
$A_t=\setdef{(x,z)\in \mathfrak{A}}{\mathfrak{p}x=u(t)}$.
The solution concepts introduced in Definition~\ref{def:sol} carry over accordingly.  


Next, we show that the boundary data can be eliminated by a suitable affine transformation, leading to an equivalent evolution problem with constant domain and an induced inhomogeneity.
The underlying idea is classical and goes back to {\sc Fattorini}, who applied this approach to linear systems in~\cite{Fattorini1968}. Here, we present a corresponding formulation tailored to our nonlinear and set-valued setting.

\begin{lem}[Equivalence via lifting]\label{lem:equivalence-lifting}
Let $T>0$ and $u\in W^{1,1}([0,T],\R^m)$ be given, and let $X$ be a real Hilbert space, $\mathfrak{A}:X\rightrightarrows X$ and $\mathfrak{p}:\dom(\mathfrak{A})\to \R^m$ be linear and surjective. For $i=1,\ldots,m$, let $p_1,\ldots,p_m\in\dom(\mathfrak{A})$, such that $\mathfrak{p}p_i=e_i$, where $e_i\in\R^m$ is the $i$th canonical unit vector. Denoting the $i$th component of $u$ by $u_i\in  W^{1,1}([0,T],\R)$,  we define
\begin{equation}
\left(t\mapsto \widetilde{x}(t):=\sum_{i=1}^m p_i u_i(t)\right)\in W^{1,1}([0,T],X),\label{eq:tildexbnd}
\end{equation}
and, for each $t\in[0,T]$, the multivalued operator $\mathcal{A}_t:X\rightrightarrows X$ with constant domain
\[
\dom(\mathcal{A}_t)
=\setdef{x\in\dom\mathfrak{A}}{\mathfrak{p}x=0},
\]
and
\[
\mathcal{A}_t(z)
:=\mathfrak{A}\bigl(z+\widetilde{x}(t)\bigr)+\dot{\widetilde{x}}(t),\quad z\in\dom(\mathcal{A}_t). 
\]
Then $x:[0,T]\to X$ solves~\eqref{eq:bndcont} in the strong/generalized sense if, and only if,
$z:=x-\widetilde{x}$ solves the abstract Cauchy problem
\[
\dot{z}(t)\in -\,\mathcal{A}_t\bigl(z(t)\bigr),
\quad t\in[0,T],\qquad
z(0)=x_0-\widetilde{x}(0),
\]
in the corresponding sense.
\end{lem}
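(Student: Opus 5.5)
The proof is a direct computation in both solution concepts: we translate a (strong or discrete) solution $x$ by the lifting $\widetilde{x}$ from \eqref{eq:tildexbnd} and check that the constraint and the inclusion transform as claimed. Throughout, $A_t=\setdef{(x,w)\in\mathfrak{A}}{\mathfrak{p}x=u(t)}$. The basic algebraic observation uses linearity of $\mathfrak{p}$ and $\mathfrak{p}p_i=e_i$, which give $\mathfrak{p}\widetilde{x}(t)=u(t)$. Hence for any $t$ and $z\in X$,
\[
z\in\dom(\mathcal{A}_t)\iff z+\widetilde{x}(t)\in\dom(A_t),
\qquad
\mathcal{A}_t(z)=A_t\bigl(z+\widetilde{x}(t)\bigr)+\dot{\widetilde{x}}(t).
\]
The same relation holds with $\dot{\widetilde{x}}(t)$ replaced by any vector $c$, in the sense that $w\in -A_t(z+\widetilde{x}(t))-c$ iff $w\in -\mathfrak{A}(z+\widetilde{x}(t))-c$ together with $\mathfrak{p}z=0$. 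The initial conditions correspond trivially, since $z(0)=x_0-\widetilde{x}(0)$.

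\textbf{Strong solutions.} Since $u\in W^{1,1}$ and each $p_i$ is fixed, $\widetilde{x}\in W^{1,1}([0,T],X)$. Hence $x\in W^{1,1}$ if and only if $z=x-\widetilde{x}\in W^{1,1}$, and $\dot z=\dot x-\dot{\widetilde{x}}$ a.e. For a.e.\ $t$, the inclusion $\dot x(t)\in-A_t(x(t))$ is then equivalent to $\dot z(t)\in -\mathfrak{A}(z(t)+\widetilde{x}(t))-\dot{\widetilde{x}}(t)$ with $\mathfrak{p}z(t)=0$, that is, to $\dot z(t)\in-\mathcal{A}_t(z(t))$. The condition $x_0\in\overline{\dom(A_0)}$ translates into $z(0)\in\overline{\dom(\mathcal{A}_0)}$ because translation by $\widetilde{x}(0)$ is a homeomorphism mapping one domain onto the other.

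\textbf{Generalized solutions.} Take partitions $\mathcal{P}^n$ and iterates $x_k^n$ as in Definition~\ref{def:sol}, write $h_k^n:=t_k^n-t_{k-1}^n$, and set $z_k^n:=x_k^n-\widetilde{x}(t_k^n)$. Then
\[
\frac{z_k^n-z_{k-1}^n}{h_k^n}\in -\mathfrak{A}\bigl(z_k^n+\widetilde{x}(t_k^n)\bigr)-q_k^n,\qquad
q_k^n:=\frac{\widetilde{x}(t_k^n)-\widetilde{x}(t_{k-1}^n)}{h_k^n}=\frac{1}{h_k^n}\int_{t_{k-1}^n}^{t_k^n}\dot{\widetilde{x}}(s)\,ds,
\]
with $\mathfrak{p}z_k^n=0$. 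This is the scheme for $\mathcal{A}$ except that the inhomogeneity $\dot{\widetilde{x}}(t_k^n)$ is replaced by the averages $q_k^n$.

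This is the main obstacle. Point evaluations $\dot{\widetilde{x}}(t_k^n)$ of an $L^1$ function are not meaningful, so the two schemes do not literally coincide. I would invoke Remark~\ref{rem:inhomogeneities}: the step functions built from the averages $q_k^n$ converge to $\dot{\widetilde{x}}$ in $L^1([0,T],X)$ as the mesh tends to zero. This is the standard fact that averaging projections onto step functions converge in $L^1$, proved by density of continuous functions and uniform $L^1$-contractivity of these projections. The remark then allows any such approximating step function as inhomogeneity, and uniqueness of generalized solutions guarantees the same limit.

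For the step functions, $z^n=x^n-\widetilde{x}^n$, where $\widetilde{x}^n$ is the step function with values $\widetilde{x}(t_k^n)$. By uniform continuity of $\widetilde{x}$ on $[0,T]$, $\widetilde{x}^n\to\widetilde{x}$ uniformly, so $x^n\to x$ uniformly if and only if $z^n\to x-\widetilde{x}$ uniformly. The converse direction is symmetric: set $x_k^n:=z_k^n+\widetilde{x}(t_k^n)$ and reverse each step, again using Remark~\ref{rem:inhomogeneities} to pass between the averaged and the evaluated inhomogeneity.
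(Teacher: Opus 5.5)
Your proposal is correct and follows essentially the same route as the paper. You check $\mathfrak{p}\widetilde{x}(t)=u(t)$ and verify strong solutions directly. For generalized solutions you shift the discrete iterates by $\widetilde{x}(t_k^n)$, identify the resulting inhomogeneity as the cell averages of $\dot{\widetilde{x}}$ converging in $L^1$, invoke Remark~\ref{rem:inhomogeneities}, and treat the converse symmetrically. Your write-up is in places more careful than the paper's: your sign $-q_k^n$ is the correct one, you compare $z^n$ with the step function $\widetilde{x}^n$ rather than with $\widetilde{x}$ itself, and you explicitly handle the fact that point values $\dot{\widetilde{x}}(t_k^n)$ are not meaningful for an $L^1$ derivative.
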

\begin{proof}
Our construction of $p_1,\ldots,p_m\in\dom(\mathfrak{A})$ gives
\[
          \mathfrak{p}\!\left(u_1p_1+\cdots+u_mp_m\right)=
      \begin{pmatrix} u_1\\\vdots\\u_m \end{pmatrix}
      \qquad\text{for all }u_1,\ldots,u_m\in\R,
\]
and thus, $\mathfrak{p}\,\widetilde{x}(t)=u(t)$ for all $t\in[0,T]$. For strong solutions, the assertion is straightforward, since it follows by a direct verification. It therefore remains to prove the statement for generalized solutions.

Let $x:[0,T]\to X$ be a generalized solution of~\eqref{eq:bndcont} in the sense of Definition~\ref{def:sol}
applied to the non-autonomous abstract Cauchy problem~\eqref{ACP-t} with
\[
A_t=\setdef{(x,z)\in \mathfrak{A}}{\mathfrak{p}x=u(t)}.
\]
Thus, there exist partitions $\mathcal P^n: 0=t_0^n<\dots<t_{m(n)}^n=T$ with mesh size
$\max_k (t_k^n-t_{k-1}^n)\to 0$, and sequences $(x_k^n)_{k=0}^{m(n)}\subset X$ such that the associated
step functions $x^n$ converge uniformly to $x$ as $n\to\infty$, and
\begin{equation}\label{eq:gen-x}
\frac{x_k^n-x_{k-1}^n}{t_k^n-t_{k-1}^n}\in -A_{t_k^n}(x_k^n),
\qquad k=1,\dots,m(n),
\end{equation}
with $x_0^n=x_0$. In particular, it follows that $x_k^n\in \dom(\mathfrak{A})$ and $\mathfrak{p}x_k^n=u(t_k^n)$. Define, for $k=0,\dots,m(n)$,
\[
z_k^n:=x_k^n-\widetilde{x}(t_k^n).
\]
Then $\mathfrak{p}z_k^n=\mathfrak{p}x_k^n-u(t_k^n)=0$, hence $z_k^n\in \ker\mathfrak{p}$.
Moreover,
\[
\frac{z_k^n-z_{k-1}^n}{t_k^n-t_{k-1}^n}
=
\frac{x_k^n-x_{k-1}^n}{t_k^n-t_{k-1}^n}
-
\frac{\widetilde{x}(t_k^n)-\widetilde{x}(t_{k-1}^n)}{t_k^n-t_{k-1}^n}.
\]
Since $x_k^n=z_k^n+\widetilde{x}(t_k^n)$ and $\mathfrak{p}x_k^n=u(t_k^n)$, we have
\[
A_{t_k^n}(x_k^n)
=
\mathfrak{A}(x_k^n)
=
\mathfrak{A}\bigl(z_k^n+\widetilde{x}(t_k^n)\bigr).
\]
Hence \eqref{eq:gen-x} is equivalent to
\begin{equation}\label{eq:gen-z}
\frac{z_k^n-z_{k-1}^n}{t_k^n-t_{k-1}^n}
\in
-\mathfrak{A}\bigl(z_k^n+\widetilde{x}(t_k^n)\bigr)
+
\frac{\widetilde{x}(t_k^n)-\widetilde{x}(t_{k-1}^n)}{t_k^n-t_{k-1}^n},
\qquad k=1,\dots,m(n).
\end{equation}
Let $z^n$ be the associated step functions. Then $z^n=x^n-\widetilde{x}(\cdot)$,
so $z^n\to z:=x-\widetilde{x}$ in $C([0,T];X)$ as $n\to\infty$, since $x^n\to x$ in $C([0,T];X)$ as $n\to\infty$ and
$\widetilde{x}\in W^{1,1}([0,T],X)\subset C([0,T],X)$.
Moreover, since $\widetilde{x}\in W^{1,1}([0,T],X)$, the step functions
\[
d^n(t):=\frac{\widetilde{x}(t_k^n)-\widetilde{x}(t_{k-1}^n)}{t_k^n-t_{k-1}^n},
\qquad t\in(t_{k-1}^n,t_k^n],
\]
which are the cell averages of $\dot{\widetilde{x}}$, converge to $\dot{\widetilde{x}}$
in $L^1([0,T],X)$.
Thus, \eqref{eq:gen-z} is not precisely the discretization from Definition~\ref{def:sol} for
the abstract Cauchy problem
\[
\dot z(t)\in -\,\mathfrak{A}\bigl(z(t)+\widetilde{x}(t)\bigr)-\dot{\widetilde{x}}(t)
= -\,\mathcal A_t\bigl(z(t)\bigr),
\qquad z(0)=x_0-\widetilde{x}(0),
\]
since $\dot{\widetilde{x}}$ is replaced by the corresponding difference quotient.
However, the last paragraph of Remark~\ref{rem:inhomogeneities}, together with the convergence
$d^n\to\dot{\widetilde{x}}$ in $L^1([0,T],X)$, ensures that $z=x-\widetilde{x}$ is indeed
a generalized solution.

Conversely, let $z:[0,T]\to X$ be a generalized solution of the abstract Cauchy problem
\[
\dot z(t)\in -\,\mathcal A_t(z(t)),
\qquad z(0)=x_0-\widetilde{x}(0),
\]
in the sense of Definition~\ref{def:sol}.
Then there exist partitions $\mathcal P^n$ and sequences $(z_k^n)$ such that
$z^n\to z$ in $C([0,T],X)$ as $n\to\infty$ and
\begin{equation}\label{eq:gen-z-conv}
\frac{z_k^n-z_{k-1}^n}{t_k^n-t_{k-1}^n}
\in -
\mathcal A_{t_k^n}(z_k^n),
\qquad k=1,\dots,m(n).
\end{equation}
Define
\[
x_k^n:=z_k^n+\widetilde{x}(t_k^n).
\]
Then $x^n=z^n+\widetilde{x}(\cdot)\to z+\widetilde{x}=:x$ in $C([0,T],X)$ as $n\to\infty$.
Moreover, using
\[
\frac{x_k^n-x_{k-1}^n}{t_k^n-t_{k-1}^n}
=
\frac{z_k^n-z_{k-1}^n}{t_k^n-t_{k-1}^n}
+
\frac{\widetilde{x}(t_k^n)-\widetilde{x}(t_{k-1}^n)}{t_k^n-t_{k-1}^n},
\]
and recalling that
\[
\mathcal A_{t_k^n}(z_k^n)
=
\mathfrak A\bigl(z_k^n+\widetilde{x}(t_k^n)\bigr)
+
\dot{\widetilde{x}}(t_k^n),
\]
we obtain
\[
\frac{x_k^n-x_{k-1}^n}{t_k^n-t_{k-1}^n}
\in -A_{t_k^n}(x_k^n) +
g_k^n,
\]
where the coefficients
\[
g_k^n:=
\frac{\widetilde{x}(t_k^n)-\widetilde{x}(t_{k-1}^n)}{t_k^n-t_{k-1}^n}
-
\dot{\widetilde{x}}(t_k^n)
\]
define a step function converging to zero in $L^1([0,T],X)$.
By the last paragraph of Remark~\ref{rem:inhomogeneities}, this fits into the discretization
scheme from Definition~\ref{def:sol} for~\eqref{eq:bndcont}.
Hence $x=z+\widetilde{x}$ is a generalized solution of~\eqref{eq:bndcont}.
\end{proof}
Based on the previous lemma and in combination with Theorem~\ref{thm:barbu-evans}, existence results for the abstract boundary control system~\eqref{eq:bndcont} can be established.
We now consider a class of boundary control systems.
While this class may appear rather specialized, it captures exactly the structure needed for the drill-string model.

\begin{prop}\label{prop:specbcs}
Consider the abstract boundary control system~\eqref{eq:bndcont}, where $X$ is a real Hilbert space and
\[
\mathfrak{A}:X\rightrightarrows X\quad \text{with}\quad \mathfrak{A}x = D_f x + B(x),
\]
where $D_f:\dom(D_f)\subset X\to X$ is a linear operator, 
$B:X\rightrightarrows X$ is maximal monotone with $0\in\operatorname{int}\dom(B)$, and $\mathfrak{p}:\dom(D_f)\to\R^m$ is linear and surjective.
Assume that the following properties hold:
\begin{enumerate}[label=(\roman*),ref=(\roman*)]
\item The restriction $D$ of $D_f$ to $\ker \mathfrak{p}$ is skew-adjoint.
\item There exist $p_1,\ldots,p_m\in\dom(D_f)$ satisfying
$\mathfrak{p}p_i=e_i$, $i=1,\ldots,m$,
where $e_i\in\R^m$ denotes the $i$th canonical unit vector, such that
\begin{equation}
\forall\, x\in \dom(B) \ \forall\, u_1,\ldots,u_m\in\R:\ x+\sum_{i=1}^m u_ip_i\in \dom(B)\ \wedge\ B(x)=B\left(x+\sum_{i=1}^m u_ip_i\right).
\label{eq:B2shift}
\end{equation}
\end{enumerate}
Then the following assertions hold:
\begin{enumerate}[label=(\alph*),ref=(\alph*)]
\item If $u\in W^{2,\infty}([0,T],\R^m)$ and $x_0\in \dom(D_f)\cap\dom(B)$ with $\mathfrak{p}x_0=u(0)$, then
\eqref{eq:bndcont} has a~unique strong solution.
\item If $u\in W^{1,1}([0,T],\R^m)$ and $x_0\in \overline{\dom(D_f)\cap\dom(B)}$, then \eqref{eq:bndcont} has a~unique generalized solution.
\end{enumerate}
\end{prop}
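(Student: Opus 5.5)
The plan is to reduce \eqref{eq:bndcont} to a problem with constant domain via Lemma~\ref{lem:equivalence-lifting}, recognise the resulting operator family as one of the form treated in Lemma~\ref{lem:Atlam-translation-unbdd}, and then apply Theorem~\ref{thm:barbu-evans}.

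\textbf{Step 1: lifting.} With the given $p_1,\ldots,p_m$ we set $\widetilde x(t)=\sum_{i} p_i u_i(t)$. By Lemma~\ref{lem:equivalence-lifting} it suffices to solve
\[
\dot z\in-\mathcal A_t z,\qquad
\mathcal A_t z=D_f\bigl(z+\widetilde x(t)\bigr)+B\bigl(z+\widetilde x(t)\bigr)+\dot{\widetilde x}(t),
\]
on $\ker\mathfrak p\cap\dom(B)$. Linearity of $D_f$ gives $D_f(z+\widetilde x(t))=Dz+\sum_i u_i(t)D_fp_i$. The shift invariance \eqref{eq:B2shift} gives $B(z+\widetilde x(t))=B(z)$, with the same domain. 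Hence
\[
\mathcal A_t z=Dz+B(z)+f(t),\qquad f(t):=\sum_{i=1}^m\bigl(u_i(t)D_fp_i+\dot u_i(t)p_i\bigr),
\]
with $\dom(\mathcal A_t)=\dom(D)\cap\dom(B)$.

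\textbf{Step 2: structure of $\mathcal A_t$.} By (i), $D$ is skew-adjoint, and $B$ is maximal monotone with $0\in\operatorname{int}\dom(B)$. Lemma~\ref{lem:Atlam-translation-unbdd} therefore applies and gives maximal monotonicity of $\mathcal A_t$ together with the Yosida estimate \eqref{eq:Atlam-translation-Lip}.

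\textbf{Step 3: regularity of $f$.} For (a), $u\in W^{2,\infty}$ implies $u,\dot u\in W^{1,\infty}$, so $f\in W^{1,\infty}([0,T],X)$. Lemma~\ref{lem:Atlam-translation-unbdd}\,\ref{lem:Atlam-translation-unbdda} then yields Assumption~\ref{ass:barbu-evans}\,\ref{ass:barbu-evansa}. For (b), $u\in W^{1,1}$ only gives $u\in C$ and $\dot u\in L^1$, so $f\in L^1([0,T],X)$. Part~\ref{lem:Atlam-translation-unbddb} then yields Assumption~\ref{ass:barbu-evans}\,\ref{ass:barbu-evansb}.

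\textbf{Step 4: initial data and uniqueness.} In (a) we have $z_0=x_0-\widetilde x(0)$, which satisfies $\mathfrak p z_0=0$ and $z_0\in\dom(D_f)$, so $z_0\in\dom(D)$. The shift property \eqref{eq:B2shift}, applied with $-u(0)$, gives $z_0\in\dom(B)$. Hence $z_0\in\dom(\mathcal A_0)$, and Theorem~\ref{thm:barbu-evans}\,\ref{thm:barbu-evansa} provides a unique strong solution. Transferring back via Lemma~\ref{lem:equivalence-lifting} gives existence, and uniqueness follows because the correspondence $x\leftrightarrow z$ is bijective. Part (b) runs the same way with Theorem~\ref{thm:barbu-evans}\,\ref{thm:barbu-evansb}, but needs $z_0\in\overline{\dom(D)\cap\dom(B)}$.

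\textbf{Main obstacle.} I expect the closure issue in Step 4(b) to be the main obstacle. Since $x_0$ lies only in $\overline{\dom(D_f)\cap\dom(B)}$, we cannot impose $\mathfrak p x_0=u(0)$ pointwise. I will show that
\[
\overline{\dom(D_f)\cap\dom(B)}=\overline{\dom(D)\cap\dom(B)}
\]
and that this closure is invariant under shifts by $\operatorname{span}\{p_i\}$. The shift invariance comes from \eqref{eq:B2shift} by taking limits. For the equality of closures, take $y\in\dom(D_f)\cap\dom(B)$. Then $y-\sum_i(\mathfrak p y)_ip_i$ lies in $\dom(D)\cap\dom(B)$. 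To recover $y$ itself, I will approximate each $p_i$ by elements of $\ker\mathfrak p$. This is possible if $\dom(D)$ is dense in $X$, which holds because skew-adjoint operators are densely defined. To keep the approximants inside $\dom(B)$, I will use the shift invariance of $\dom(B)$ along $\operatorname{span}\{p_i\}$ together with the fact that $\dom(D)\cap\dom(B)$ is dense in $\dom(B)$; the latter must itself be checked, and I would try to get it from the openness supplied by $0\in\operatorname{int}\dom(B)$. Once these closures agree, $z_0\in\overline{\dom(\mathcal A_0)}$, which completes (b).
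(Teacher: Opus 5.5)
Your proposal follows the paper's proof. You lift with $\widetilde x=\sum_i p_iu_i$ via Lemma~\ref{lem:equivalence-lifting}, use \eqref{eq:B2shift} to replace $B(z+\widetilde x(t))$ by $B(z)$, recognise the lifted operator as $Dz+B(z)+f(t)$, and conclude with Lemma~\ref{lem:Atlam-translation-unbdd}\,\ref{lem:Atlam-translation-unbdda} resp.\ \ref{lem:Atlam-translation-unbddb} and Theorem~\ref{thm:barbu-evans}\,\ref{thm:barbu-evansa} resp.\ \ref{thm:barbu-evansb}. Your sign $+\dot{\widetilde x}(t)$ is the one consistent with Lemma~\ref{lem:equivalence-lifting}; the paper writes $-\dot{\widetilde x}(t)$, which is immaterial. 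Steps 1--3 and part (a) are fine as written.

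The one place you go beyond the paper is the condition $z_0=x_0-\widetilde x(0)\in\overline{\dom(D)\cap\dom(B)}$ in (b). The paper passes over this with ``completely analogous'', and you are right that it must be checked. However, you leave the decisive density claim open, and the ingredient you propose for it is not enough. The interior point $0$ together with density of $\dom(D)$ only yields approximants for points of $\dom(B)$ near $0$.

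The missing idea is a structural fact about domains of maximal monotone operators. If $\operatorname{int}\dom(B)\neq\emptyset$, then $\overline{\operatorname{int}\dom(B)}=\overline{\dom(B)}$; this is Rockafellar's result, resting on convexity of $\overline{\dom(B)}$. Since $\operatorname{int}\dom(B)$ is open and $\dom(D)$ is dense, $\dom(D)\cap\operatorname{int}\dom(B)$ is dense in $\operatorname{int}\dom(B)$. Hence
\[
\overline{\dom(D)\cap\dom(B)}=\overline{\dom(D_f)\cap\dom(B)}=\overline{\dom(B)}.
\]
By \eqref{eq:B2shift}, $\overline{\dom(B)}$ is invariant under translations by $\operatorname{span}\{p_1,\ldots,p_m\}$, so $z_0\in\overline{\dom(\mathcal A_0)}$. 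This also makes your detour through approximating the $p_i$ by elements of $\ker\mathfrak p$ unnecessary. Once $\dom(D)\cap\dom(B)$ is dense in $\dom(B)$, every $y\in\dom(D_f)\cap\dom(B)$ is covered directly.
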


\begin{proof}
Let $\widetilde{x}$ be defined as in \eqref{eq:tildexbnd}. Consider the operator function $t\mapsto A_t$, $t\in[0,T]$, with $\dom(A_t)=\dom(D)\cap\dom(B)$ and
\[A_t z= Dz+B\bigl(z+\tilde x(t)\bigr)+D_f\widetilde{x}(t)-\dot{\widetilde{x}}(t),\quad z\in \dom(A_t).\]
If $u\in W^{2,\infty}([0,T],\R^m)$ and 
$x_0\in \dom(D_f)\cap\dom(B)$ with $\mathfrak{p}x_0=u(0)$, then 
$z_0:=x_0-\widetilde x(0)\in\ker \mathfrak{p}$ and since $x_0\in\dom (D_f)$ and $\widetilde x(0)\in\dom(D_f)$ (by $p_i\in\dom(D_f)$) we have $z_0 \in \dom(D)$. Moreover, $z_0 \in \dom(B)$ by $x_0\in\dom(B)$ and~\eqref{eq:B2shift}, and thus $z_0\in \dom(A_0)$. Further, by invoking~\eqref{eq:B2shift}, we have, for all $z\in X$, $t\in[0,T]$, 
\[B\bigl(z+\tilde x(t)\bigr)=B(z).\]
Hence, by Lemma~\ref{lem:Atlam-translation-unbdd}\,\ref{lem:Atlam-translation-unbdda}, the operator function $t\mapsto A_t$ fulfills
Assumption~\ref{ass:barbu-evans}\,\ref{ass:barbu-evansa}. Then Theorem~\ref{thm:barbu-evans}\,\ref{thm:barbu-evansa} yields that the initial value problem $\dot{z}(t) \in - A_t(z(t))$ with $z(0)=z_0$ has a~unique strong solution $z\in W^{1,1}([0,T],X)$. Since the strong solutions of $\dot{z}(t) \in - A_t(z(t))$ are, by Lemma~\ref{lem:equivalence-lifting}, related 
to those of
$\dot{x}(t)\in -\mathfrak{A}x(t)$, 
$\mathfrak{p}x(t)=u(t)$ by $x=z+\tilde{x}$, we obtain that the boundary control system \eqref{eq:bndcont} has a~unique strong solution.\\
It remains to prove the assertion for generalized solutions. The argument is completely analogous
to the proof for strong solutions: one merely replaces
Lemma~\ref{lem:Atlam-translation-unbdd}\,\ref{lem:Atlam-translation-unbdda} by
Lemma~\ref{lem:Atlam-translation-unbdd}\,\ref{lem:Atlam-translation-unbddb}, and
Theorem~\ref{thm:barbu-evans}\,\ref{thm:barbu-evansa} by
Theorem~\ref{thm:barbu-evans}\,\ref{thm:barbu-evansb}.
We therefore omit the details.
\end{proof}

\section{The open-loop system}\label{Sec:wellpose}
We now investigate the existence of solutions to the drill string model \eqref{drillstring} (in open-loop configuration, i.e., without a controller) by means of the previously outlined theory of nonlinear evolution equations. In doing so, we analyze the slightly more general case where the damping is described by a relation; which is evident from the first two equations in \eqref{drillstring}.

\subsection{Physical assumptions}
We begin by formulating the assumptions on the physical parameters.

\begin{assumption}\label{assump}\
\begin{enumerate}[label=(\alph*),ref=(\alph*)]
    \item\label{assumpa}\
  The mass density and shear modulus satisfy
\begin{align*}
   & \rho(\cdot),\rho^{-1}(\cdot),G(\cdot), G^{-1}(\cdot) \in L^{\infty}([0,\ell],\R), \\
   & \rho(\xi), G(\xi)>0 \quad \forall\, \xi \in [0,\ell].
\end{align*}
\item\label{assumpb}\
 The damping torque $F_d$ is a pointwise multivalued real map that satisfies the following assumption:
\begin{enumerate}[label=(\roman*),ref=(\roman*)]
  \item\label{assumpb1}
 For all $\xi\in[0,\ell]$, $F_d(\xi,\cdot):\R\rightrightarrows\R$ with $\operatorname{dom}(F_d(\xi,\cdot))=\R$.
  \item\label{assumpb2}
 For a.e.\ $\xi\in[0,\ell]$, $F_d(\xi,\cdot)$ is maximal monotone.
  \item\label{assumpb3}
The multivalued map
\[
  [0,\ell]\rightrightarrows\R^2,\qquad
  \xi\mapsto \operatorname{graph}(F_d(\xi,\cdot))
  :=\setdef{(v,F)\in\R^2}{F\in F_d(\xi,v)}
\]
is measurable, that is, for every open set $O\subset\R^2$ the set
\[
  \setdef{\xi\in[0,\ell]}
  {\operatorname{graph}(F_d(\xi,\cdot))\cap O\neq\emptyset}
\]
is Lebesgue measurable.
\item\label{assumpb4}
 There exist $\alpha\ge 0$ and $\beta\in L^2([0,\ell],\R)$ such that
  for a.e.\ $\xi\in[0,\ell]$ and all $v\in\R$ there exists $F\in F_d(\xi,v)$ with
  \[
    |F|\le \alpha\,|v|+\beta(\xi).
  \]
\end{enumerate}
\item\label{assumpc}
 The rotary inertia and the second moment of area at $\xi=\ell$ are positive, i.e., $J>0$ and   
$\Gamma>0$.
\item\label{assumpd}
 The damping force at the cutting end defines a~maximal monotone relation on a~neighborhood of $0$. That is, $F_e:\R\rightrightarrows \R$ is maximal monotone with $0\in\operatorname{int}(\operatorname{dom}(F_e))$.
\end{enumerate}
\end{assumption}
In what follows, we focus on the distributed damping relation $F_d$. In particular, we show that the  multivalued map
\begin{equation}\label{eq:Nemytskii}
  \widetilde F_d := \setdef{(v,F)\in L^2([0,\ell],\R)\times L^2([0,\ell],\R)}{F(\xi)\in F_d(\xi,v(\xi))\ \text{for a.e.\ }\xi\in[0,\ell]}
\end{equation}
is maximal monotone and satisfies $\dom(\widetilde F_d)=L^2([0,\ell],\R)$. Set-valued mappings of this type on function spaces are commonly referred to as \emph{Nemytskii relations}; see, e.g., \cite{Tolstonogov2021}.

\begin{prop}[Distributed damping as an everywhere defined maximal monotone Nemytskii relation]\label{prop:distrdamp}
Let $F_d:[0,\ell]\times\R\rightrightarrows\R$ satisfy Assumption~\ref{assump}\,\ref{assumpb}. Then the multivalued map
$\widetilde F_d : L^2([0,\ell],\R)\rightrightarrows L^2([0,\ell],\R)$ as in \eqref{eq:Nemytskii} is maximal monotone with $\dom(\widetilde F_d)=L^2([0,\ell],\R)$.
\end{prop}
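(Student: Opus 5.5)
We show three things in turn: monotonicity, that every $v\in L^2([0,\ell],\R)$ lies in $\dom(\widetilde F_d)$, and maximality via Minty's theorem (Theorem~\ref{thm:Minty}) with $\lambda=1$.

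\emph{Monotonicity.} Let $(v_i,F_i)\in\widetilde F_d$, $i=1,2$. For a.e.\ $\xi$, monotonicity of $F_d(\xi,\cdot)$ from Assumption~\ref{assump}\,\ref{assumpb}\,\ref{assumpb2} gives $(v_1(\xi)-v_2(\xi))(F_1(\xi)-F_2(\xi))\ge 0$. Integrating over $[0,\ell]$ yields $\langle v_1-v_2,F_1-F_2\rangle_{L^2}\ge0$.

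\emph{Everywhere defined domain.} Fix $v\in L^2$ and consider the set-valued map
\[
\Phi(\xi):=F_d(\xi,v(\xi))\cap[-\alpha|v(\xi)|-\beta(\xi),\,\alpha|v(\xi)|+\beta(\xi)].
\]
By \ref{assumpb4}, $\Phi(\xi)$ is nonempty for a.e.\ $\xi$. By Lemma~\ref{lem:mm-values-closed-convex}, $F_d(\xi,v(\xi))$ is closed, so $\Phi(\xi)$ is closed. We then need $\Phi$ to be measurable, so that the Kuratowski--Ryll-Nardzewski selection theorem supplies a measurable selection $F$. The selection satisfies $|F|\le\alpha|v|+\beta\in L^2$, hence $F\in L^2$ and $(v,F)\in\widetilde F_d$.

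For measurability, graph-measurability \ref{assumpb3} together with closedness of the graphs of maximal monotone relations makes $\xi\mapsto \operatorname{graph}F_d(\xi,\cdot)$ a measurable closed-valued map. Composing with the measurable $v$, i.e.\ intersecting the graph with the measurable closed-valued map $\xi\mapsto\{v(\xi)\}\times\R$ and projecting, should be measurable by standard results (Castaing representation, or Rockafellar--Wets, Thm.~14.13/14.36). Intersecting with the measurable closed interval is then again measurable. I expect this to be the main technical obstacle. Establishing measurability of the composition carefully, with an appropriate citation, is where the real work lies.

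\emph{Maximality.} Given $g\in L^2$, we seek $v\in L^2$ with $g-v\in\widetilde F_d(v)$. For a.e.\ $\xi$, maximal monotonicity of $F_d(\xi,\cdot)$ means the resolvent $r(\xi,\cdot):=(I+F_d(\xi,\cdot))^{-1}$ is a single-valued nonexpansive map $\R\to\R$. Set $v(\xi):=r(\xi,g(\xi))$ and $F:=g-v$, so that $F(\xi)\in F_d(\xi,v(\xi))$ a.e.

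Measurability of $v$ follows because $\xi\mapsto r(\xi,s)$ is measurable for each fixed $s$: its graph description is $\{w: (w,s-w)\in\operatorname{graph}F_d(\xi,\cdot)\}$, which is measurable by \ref{assumpb3}. Since $r(\xi,\cdot)$ is continuous, $r$ is Carathéodory, and hence $\xi\mapsto r(\xi,g(\xi))$ is measurable.

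For the $L^2$ bound, fix a selection $F^0(\xi)\in F_d(\xi,0)$ with $|F^0|\le\beta$, which exists by the previous step applied to $v=0$. Then $r(\xi,F^0(\xi))=0$, and nonexpansiveness gives
\[
|v(\xi)|\le|g(\xi)-F^0(\xi)|\le|g(\xi)|+\beta(\xi).
\]
Hence $v\in L^2$, so also $F\in L^2$. Thus $\widetilde F_d+I$ is surjective, and Theorem~\ref{thm:Minty} yields maximal monotonicity.
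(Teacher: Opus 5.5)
Your proof is correct, but it reaches maximality by a different route from the paper. The paper proves only the full-domain property by hand. It then obtains monotonicity and maximality entirely from an external result (Tolstonogov, Thm.~3.1 on Nemytskii relations), which it applies using Assumption~\ref{assump}\,\ref{assumpb}\,\ref{assumpb1}--\ref{assumpb3} together with $\dom(\widetilde F_d)\neq\emptyset$ from its first step. You instead verify monotonicity directly and get maximality from Theorem~\ref{thm:Minty}, solving $g-v\in\widetilde F_d(v)$ pointwise with the nonexpansive scalar resolvents $r(\xi,\cdot)$. The Carath\'eodory structure gives measurability of $v$, and the estimate $|v|\le |g|+\beta$ gives $v\in L^2$. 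This argument is self-contained. It also shows that maximality needs the growth condition only at $v=0$, and the element of $F_d(\xi,0)$ used there need not be chosen measurably, since only the pointwise bound enters.

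For the domain, the paper avoids a selection theorem. Because each $F_d(\xi,v(\xi))$ is a closed interval, it takes the element of minimal modulus, $\min\{\max\{0,a\},b\}$, with measurable endpoints $a=\inf F_d(\xi,v(\xi))$ and $b=\sup F_d(\xi,v(\xi))$. This selector is automatically dominated by $\alpha|v|+\beta$. Your truncation combined with Kuratowski--Ryll-Nardzewski achieves the same result with a heavier tool.

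Both routes rest on measurability of $\xi\mapsto F_d(\xi,v(\xi))$. The paper simply asserts this, while you correctly reduce it to intersecting the measurable closed-valued graph map with $\xi\mapsto\{v(\xi)\}\times\R$.

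One small point in your resolvent step: the set $\{(w,s-w): w\in O\}$ is not open in $\R^2$, so Assumption~\ref{assump}\,\ref{assumpb}\,\ref{assumpb3} does not apply to it verbatim. You need the same intersection argument, this time with the closed line $\{(w,s-w): w\in\R\}$, before measurability of $r(\cdot,s)$ follows.
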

\begin{proof}
\emph{Step 1 (Everywhere defined).}
Let $v\in L^2([0,\ell],\R)$. Define $H:[0,\ell]\rightrightarrows \R$ by
\[
H(\xi):=F_d(\xi,v(\xi)),\qquad \xi\in[0,\ell].
\]
Then, by using maximal monotonicity of the scalar relations (see
Assumption~\ref{assump}\,\ref{assumpb}\,\ref{assumpb2}) together with the full pointwise domain
(Assumption~\ref{assump}\,\ref{assumpb}\,\ref{assumpb1}), Lemma~\ref{lem:mm-values-closed-convex} implies that, for a.e.\ $\xi\in[0,\ell]$, the set
$H(\xi)\subset\R$ is nonempty, closed and convex.
In other words, for a.e.\ $\xi\in[0,\ell]$, $H(\xi)\subset\R$ is a~closed and nonempty interval.
By the measurability assumption on the multivalued map $\xi\mapsto \operatorname{graph}(F_d(\xi,\cdot))$
and the measurability of $v$, it holds that $H:[0,\ell]\rightrightarrows\R$ 
is measurable as well. Hence, for every open set $I\subset\R$, the set
\[
  \setdef{\xi\in[0,\ell]}{H(\xi)\cap I\neq\emptyset}
\]
is Lebesgue measurable. Now define $a,b:[0,\ell]\to\R\cup\{-\infty,\infty\}$ by
\[
  a(\xi):=\inf H(\xi),\qquad b(\xi):=\sup H(\xi).
\]
Then $a$ and $b$ are measurable since, for every $r\in\R$,
\begin{align*}
  \setdef{\xi\in[0,\ell]}{a(\xi)<r}
  &=
  \setdef{\xi\in[0,\ell]}{H(\xi)\cap(-\infty,r)\neq\emptyset},
\\
  \setdef{\xi\in[0,\ell]}{b(\xi)>r}
  &=
  \setdef{\xi\in[0,\ell]}{H(\xi)\cap(r,\infty)\neq\emptyset}.
\end{align*}
Now choose the \emph{minimal-magnitude selector}
\[
F(\xi):=\min\bigl\{\max\{0,a(\xi)\},\,b(\xi)\bigr\},\qquad \text{for a.e.\ }\xi\in[0,\ell].
\]
Since each $H(\xi)$ is a non-empty and closed interval, $F(\xi)$ is the element of $H(\xi)$ closest to zero.
Because $a$ and $b$ are measurable, $F$ naturally inherits measurability.
Further, closedness of $H(\xi)$ yields that $F(\xi)\in H(\xi)=F_d(\xi,v(\xi))$ for a.e.\ $\xi\in [0,\ell]$.
By the linear $L^2$-growth condition (Assumption~\ref{assump}\,\ref{assumpb}\,\ref{assumpb4}) we have for a.e.\ $\xi\in[0,\ell]$ that there exists
 $\widehat F(\xi)\in H(\xi)$ with $|\widehat F(\xi)|\le \alpha|v(\xi)|+\beta(\xi)$. Then minimality gives
$|F(\xi)|\le |\widehat F(\xi)|\le \alpha|v(\xi)|+\beta(\xi)$, whence $F\in L^2([0,\ell],\R)$. In other words, $v\in \operatorname{dom}(\widetilde F_d)$.\\
\emph{Step 2 (Maximal monotonicity).}
It particularly follows from Step~1 that $\operatorname{dom}(\widetilde F_d)\neq\emptyset$. Now additionally invoking Assumption~\ref{assump}\,\ref{assumpb}\,\ref{assumpb1}--\ref{assumpb3}, maximal monotonicity of $\widetilde F_d$ can be concluded from \cite[Thm~3.1]{Tolstonogov2021}.
\end{proof}

\subsection{The drill-string model: Solution existence and uniqueness}

Here we provide a solvability analysis for the drill-string model~\eqref{drillstring}.
To this end, we first reformulate the model as a first-order evolution system.
Specifically, we choose as state variables
the torsional strain $\tau(\cdot,t)=\phi_\xi(\cdot,t)$,
the angular momentum density $L(\cdot,t)=\rho(\cdot) \phi_t(\cdot,t)$,
and the bit angular momentum $L_b(t)=J\phi_t(\ell,t)$.
With this choice, the drill-string model~\eqref{drillstring} can be written as 
\begin{equation}\label{absfor1}
\begin{aligned}
\frac{\partial}{{\partial}t}\begin{pmatrix}
    \tau(\xi,t)\\
    L(\xi,t)\\
    {L_b}(t)
\end{pmatrix}& \in\begin{pmatrix}
    \tfrac{\partial}{{\partial}\xi}\big(\rho(\xi)^{-1}L(\xi,t)\big)\\
    \tfrac{\partial}{{\partial}\xi}\big(G(\xi)\tau(\xi,t)\big)-F_d(\xi,\rho(\xi)^{-1}L(\xi,t))\\
    -\Gamma G(\ell)\tau(\ell,t)-F_e(J^{-1}L_b(t))
\end{pmatrix},\\
\rho(\ell)^{-1}L(\ell)&=J^{-1}L_b,\\
G(0)\tau(0,t)&=u(t).\\
\end{aligned}
\end{equation}
We show that this defines
an abstract boundary control system \eqref{eq:bndcont} on the space
\begin{subequations}\label{eq:Xspace}
\begin{equation}
X = L^2([0,\ell],\R)^2 \times \R
\end{equation}
endowed with the norm 
\begin{equation}
\bigg\|\left(\begin{smallmatrix}
\tau \\ L \\ L_b
\end{smallmatrix}\right)\bigg\|_X=\left(\int_0^\ell(G(\xi)\tau(\xi)^2+\rho(\xi)^{-1}L(\xi)^2)d\xi+J^{-1}\Gamma^{-1}L_b^2\right)^{1/2}.
\end{equation}
\end{subequations}
This norm is induced by an inner product. By our assumptions on $G$, $\rho$, $\Gamma$ and $J$, the latter norm is equivalent to the standard one in 
$X=L^2([0,\ell],\R)^2 \times \R$. In particular, $X$ is a~Hilbert space. In physical terms, the square of this norm expresses the energy stored in the system.

The operators in the abstract boundary control formulation \eqref{eq:bndcont}
are given by $\mathfrak{A}:X\rightrightarrows X$, $\mathfrak{p}:\dom \mathfrak{A}\to\R$ with
\begin{subequations}
    \label{eq:drilloperators}
\begin{equation}
\begin{aligned}
    \mathfrak{A}x&=\underbrace{\begin{pmatrix}
    -\tfrac{\partial}{{\partial}\xi}\big(\rho(\cdot)^{-1}L(\cdot)\big)\\
    -\tfrac{\partial}{{\partial}\xi}\big(G(\cdot)\tau(\cdot)\big)\\
    \Gamma (G\tau)(\ell)
\end{pmatrix}}_{=:D_fx}+\underbrace{\begin{pmatrix}
   0\\
    F_d(\cdot,\rho(\cdot)^{-1}L(\cdot))\\
    F_e(J^{-1}L_b)
\end{pmatrix}}_{=:B(x)},\quad x=\begin{pmatrix}
    \tau(\cdot)\\
    L(\cdot)\\
    {L_b}
\end{pmatrix},\\
\mathfrak{p}x&=(G\tau)(0),
\end{aligned}
\end{equation}
where $D_f$ and $B$ are defined on 
\begin{equation}
\begin{aligned}
\dom(D_f)&=\setdef{\left(\begin{smallmatrix} \tau\\L\\L_b \end{smallmatrix}\right) \in X}{G\tau, {\rho}^{-1}L\in W^{1,2}([0,\ell],\R)\,\wedge\, ({\rho}^{-1}L)(\ell)={J}^{-1}L_b},\\
\dom(B)&=\setdef{\left(\begin{smallmatrix} \tau\\L\\L_b \end{smallmatrix}\right) \in X}{{J}^{-1}L_b\in \dom F_e},
\end{aligned}
\end{equation}
\end{subequations}
resp. Strong and generalized solutions of \eqref{absfor1} are defined as the
corresponding solution concepts for the abstract boundary control system
\eqref{eq:bndcont} with operators given in \eqref{eq:drilloperators}.
The following theorem addresses existence and uniqueness of solutions.
Since the abstract prerequisites have been established in the previous section,
it suffices for the solvability results to verify that the present system
satisfies the assumptions of Proposition~\ref{prop:specbcs}.
\begin{thm}\label{thm:open_loop_existence}
    Consider the drill-string model \eqref{absfor1} so that the parameters satisfy Assumptions~\ref{assump}. Let $u:[0,T]\to\R$ and $\tau_{0},L_0\in L^2([0,\ell],\R)$, $L_{b0}\in\R$. Then the following holds:
\begin{enumerate}[label=(\alph*),ref=(\alph*)]
\item If $u\in W^{2,\infty}([0,T],\R)$ and $G\tau_0,\rho^{-1}L_0\in W^{1,2}([0,\ell],\R)$ with $(\rho^{-1}L_0)(\ell)=J^{-1}L_{b0}\in\dom(F_e)$ and $(G\tau_0)(0)=u(0)$, then 
the drill-string model \eqref{absfor1} has a~unique strong solution with $\tau(0,\cdot)=\tau_0(\cdot)$, $L(0,\cdot)=L_0(\cdot)$ and $L_b(0)=L_{b0}$.
\item If $u\in W^{1,1}([0,T],\R)$  and $\tau_0,L_0\in L^{2}([0,\ell],\R)$ with $J^{-1}L_{b0}\in\dom(F_e)$,
then the drill-string model \eqref{absfor1} has a~unique generalized solution with $\tau(0,\cdot)=\tau_0(\cdot)$, $L(0,\cdot)=L_0(\cdot)$ and $L_b(0)=L_{b0}$.
\end{enumerate}
\end{thm}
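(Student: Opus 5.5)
The plan is to check the hypotheses of Proposition~\ref{prop:specbcs} for the operators in \eqref{eq:drilloperators} on the space $X$ from \eqref{eq:Xspace}, with $m=1$. Both assertions (a) and (b) then follow from the corresponding assertions of that proposition.

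The first things to check are the following.
\begin{itemize}
\item $\mathfrak{p}$ is linear and surjective on $\dom(D_f)$. This is immediate: constant functions $G\tau$ are admissible.
\item $B$ is maximal monotone on $X$. The weighted inner product makes $B$ act componentwise on $(\tau,L,L_b)$, with entries $0$, $L\mapsto \widetilde F_d(\rho^{-1}L)$ and $L_b\mapsto F_e(J^{-1}L_b)$.
  \begin{itemize}
  \item Monotonicity in the $L$-component: the pairing is $\int \rho^{-1}(L_1-L_2)(F_1-F_2)\,d\xi\ge0$, by pointwise monotonicity.
  \item Monotonicity in the $L_b$-component: the pairing is $J^{-1}\Gamma^{-1}(L_{b1}-L_{b2})(F_1-F_2)\ge 0$.
  \item Maximality: use Minty's theorem (Theorem~\ref{thm:Minty}) componentwise. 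For the $L$-component, surjectivity of $I+\rho\widetilde F_d(\rho^{-1}\cdot)$ is obtained from Proposition~\ref{prop:distrdamp} applied to the rescaled relation. Concretely, $v=\rho^{-1}L$ solves $\rho v+\rho F\ni g$, i.e.\ $v+\widetilde F_d(v)\ni\rho^{-1}g$ after division by $\rho$. This holds since $\widetilde F_d$ is maximal monotone on unweighted $L^2$, and the weight is bounded above and below.
  \item The $L_b$-component is the scalar maximal monotone $F_e$ after a rescaling.
  \end{itemize}
\item $0\in\operatorname{int}\dom(B)$, since $\dom(B)=L^2\times L^2\times J\dom(F_e)$ (using $\dom\widetilde F_d=L^2$), and $0\in\operatorname{int}\dom F_e$.
\item Condition~\eqref{eq:B2shift}. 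Choose $p_1=(G^{-1}(\ell-\xi)/\ell,\,0,\,0)$, so that $\mathfrak p p_1=1$, $p_1\in\dom(D_f)$ (as $\rho^{-1}L=0$ and $L_b=0$), and $B$ does not depend on $\tau$. Hence $B(x+up_1)=B(x)$.
\end{itemize}

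The main step is skew-adjointness of $D$, the restriction of $D_f$ to $\ker\mathfrak p=\{G\tau(0)=0\}$.
\begin{itemize}
\item \emph{Skew-symmetry.} For $x\in\dom(D)$ write $w=G\tau$ and $v=\rho^{-1}L$. Then
\[\langle Dx,x\rangle=-\int_0^\ell (wv'+vw')\,d\xi+v(\ell)w(\ell),\]
using that the $L_b$-term contributes $J^{-1}\Gamma^{-1}\Gamma w(\ell)L_b=w(\ell)v(\ell)$. The integral equals $-[wv]_0^\ell=-w(\ell)v(\ell)$, since $w(0)=0$. So $\langle Dx,x\rangle=0$ and, by polarization, $D\subset -D^*$.
\item \emph{Equality $D^*=-D$.} I plan to show that $I\pm D$ are surjective; for a skew-symmetric (hence closable) operator this yields skew-adjointness. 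Solving $x\pm Dx=g$ amounts to a linear two-point boundary value problem for $(w,v)$:
\[\tfrac{1}{G}w\mp v'=g_1,\qquad \rho v\mp w'=g_2,\]
with $w(0)=0$ and $Jv(\ell)\pm\Gamma w(\ell)=g_3$. This first-order ODE system has $L^\infty$ coefficients. Take the solution with $w(0)=0$ and $v(0)=c$, which is affine in $c$. The boundary condition at $\ell$ determines $c$, provided the homogeneous problem has only the trivial solution. That follows from the energy identity: $\|x\|^2\pm\langle Dx,x\rangle=\langle g,x\rangle$ with $g=0$ gives $x=0$.
\end{itemize}
This is where the technical care is needed: Carathéodory solvability in $W^{1,2}$, and membership of the solution in $\dom(D)$ including the coupling condition $v(\ell)=J^{-1}L_b$, where one defines $L_b:=Jv(\ell)$.

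Finally, translate the hypotheses of Theorem~\ref{thm:open_loop_existence}:
\begin{itemize}
\item For (a), $x_0\in\dom(D_f)\cap\dom(B)$ with $\mathfrak px_0=u(0)$, as stated.
\item For (b), $x_0$ lies in $\overline{\dom(D_f)\cap\dom(B)}$. This is because $\dom(D_f)\cap\dom(B)$ contains all smooth $(w,v)$ with $v(\ell)=J^{-1}L_{b0}\in\dom F_e$, and these are dense in the first two components while $L_b$ is fixed.
\end{itemize}
Proposition~\ref{prop:specbcs} then gives existence and uniqueness.
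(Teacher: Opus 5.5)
Your proposal is correct and has the same skeleton as the paper's proof: verify the hypotheses of Proposition~\ref{prop:specbcs} for the operators in \eqref{eq:drilloperators}. You prove the central step, skew-adjointness of $D$, by a different route. The paper shows $\dom(D^*)\subset\dom(D)$ directly. Testing \eqref{eq:Dstar-def} with $G\tau_2,\rho^{-1}L_2\in C_0^\infty((0,\ell),\R)$ gives $G\tau,\rho^{-1}L\in W^{1,2}([0,\ell],\R)$ with distributionally identified derivatives. Two tailored boundary test elements then force $(G\tau)(0)=0$ and $(\rho^{-1}L)(\ell)=J^{-1}L_b$.

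You instead combine skew-symmetry with the range condition $\mathcal R(I\pm D)=X$. You obtain it by shooting in $v(0)=c$ for a linear first-order system with $L^\infty$ coefficients, with the energy identity ruling out a degenerate shooting map. The paper's argument needs only test-function calculus and identifies $D^*$ explicitly. Yours needs Carath\'eodory theory for the ODE, density of $\dom(D)$, and the standard lemma that a densely defined skew-symmetric operator with both $I+D$ and $I-D$ surjective is skew-adjoint. Density is easy, but state it, since it is needed for $D^*$ to be single-valued. In return, your route exhibits the resolvents $(I\pm D)^{-1}$ explicitly, which is exactly the Minty-type information the monotone-operator framework uses.

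You are also more explicit than the paper on two points it leaves tacit. One is that $x_0\in\overline{\dom(D_f)\cap\dom(B)}$ in part (b). The other is why the weight $\rho^{-1}$ does not spoil maximality of the Nemytskii component. Your lifting element $(G^{-1}(\ell-\xi)/\ell,0,0)$ works as well as the paper's $(G^{-1},0,0)$.

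One slip needs fixing in your Minty step for the $L$-component. That component of $B$ is $L\mapsto\widetilde F_d(\rho^{-1}L)$, without a factor $\rho$. So with $v=\rho^{-1}L$ the resolvent inclusion is $\rho v+F=g$ with $F\in\widetilde F_d(v)$, not $\rho v+\rho F\ni g$. Dividing by $\rho$ gives $v+\rho^{-1}F=\rho^{-1}g$. Hence Proposition~\ref{prop:distrdamp} must be applied to the rescaled pointwise relation $\rho(\xi)^{-1}F_d(\xi,\cdot)$, not to $F_d$. This relation still satisfies Assumption~\ref{assump}\,\ref{assumpb}, with growth constants multiplied by $\|\rho^{-1}\|_{L^\infty}$.

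A cleaner fix avoids Minty altogether. Since $\int_0^\ell\rho^{-1}(L_1-L_2)(F_1-F_2)\,d\xi=\int_0^\ell(v_1-v_2)(F_1-F_2)\,d\xi$, the bijection $L\mapsto\rho^{-1}L$ of $L^2([0,\ell],\R)$ carries any monotone extension of this component in the weighted space to a monotone extension of $\widetilde F_d$ in unweighted $L^2$. So maximality transfers directly from Proposition~\ref{prop:distrdamp}.
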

\begin{proof}
Consider $D_f:\dom(D_f)\subset X\to X$, $B:X \rightrightarrows X$ and $\mathfrak{p}:\dom(D_f)\to\R$ as in \eqref{eq:drilloperators}. By Proposition~\ref{prop:specbcs} the assertion is shown, if we prove that:
\begin{enumerate}[label=(\roman*),ref=(\roman*)]
\item The operator $D:\dom(D)\subset X\to X$, $x\mapsto D_fx$ with
\begin{equation}
  \dom(D)=  \setdef{\left(\begin{smallmatrix} \tau\\L\\L_b \end{smallmatrix}\right) \in X}{G\tau, {\rho}^{-1}L\in W^{1,2}([0,\ell],\R)\,\wedge\, ({\rho}^{-1}L)(\ell)={J}^{-1}L_b \,\wedge\,(G\tau)(0)=0}
\end{equation}
is skew-adjoint.
\item $B:X \rightrightarrows X$ is maximal monotone with $0\in\operatorname{int}\dom(B)$.
\item There exists $p\in \dom(D_f)$ with $\mathfrak{p}(up)=u$, $x+up\in\dom(B)$ and $B(x)=B(x+up)$ for all $x\in \dom(B)$ and $u\in\R$. 
\end{enumerate}
We now verify these conditions one by one.
\begin{enumerate}[label=(\roman*),ref=(\roman*)]
\item 
{\em Step~1:} $D$ is skew-symmetric.
Let $x_1,x_2\in\dom(D)$, and denote
\begin{equation}
x_1=\left(\begin{smallmatrix}\tau_1\\L_1\\L_{b1}\end{smallmatrix}\right),
\qquad
x_2=\left(\begin{smallmatrix}\tau_2\\L_2\\L_{b2}\end{smallmatrix}\right).
\label{eq:x1x2}\end{equation}
Using the product rule and the definition of the inner product induced by \eqref{eq:Xspace}, we compute
\begin{align*}
&\langle x_1, Dx_2\rangle_X+\langle Dx_1, x_2\rangle_X\\
&=\int_0^\ell\Big(
-G\tau_1\,\tfrac{\partial}{\partial\xi}\big(\rho^{-1}L_2\big)
-\rho^{-1}L_1\,\tfrac{\partial}{\partial\xi}\big(G\tau_2\big)
\Big)\,d\xi
+J^{-1}L_{b1}\,(G\tau_2)(\ell)\\
&\quad+\int_0^\ell\Big(
-G\tau_2\,\tfrac{\partial}{\partial\xi}\big(\rho^{-1}L_1\big)
-\rho^{-1}L_2\,\tfrac{\partial}{\partial\xi}\big(G\tau_1\big)
\Big)\,d\xi
+J^{-1}L_{b2}\,(G\tau_1)(\ell)\\
&=-\int_0^\ell \tfrac{\partial}{\partial\xi}\big(G\tau_1\,\rho^{-1}L_2\big)\,d\xi
-\int_0^\ell \tfrac{\partial}{\partial\xi}\big(G\tau_2\,\rho^{-1}L_1\big)\,d\xi
+J^{-1}\Big(L_{b1}\,(G\tau_2)(\ell)+L_{b2}\,(G\tau_1)(\ell)\Big)\\
&=-\Big[ G\tau_1\,\rho^{-1}L_2+G\tau_2\,\rho^{-1}L_1\Big]_{0}^{\ell}
+J^{-1}\Big(L_{b1}\,(G\tau_2)(\ell)+L_{b2}\,(G\tau_1)(\ell)\Big).
\end{align*}
Since $x_1,x_2\in\dom(D)$, we have $(G\tau_1)(0)=(G\tau_2)(0)=0$ and
$(\rho^{-1}L_1)(\ell)=J^{-1}L_{b1}$, $(\rho^{-1}L_2)(\ell)=J^{-1}L_{b2}$.
Hence the contribution at $\xi=0$ vanishes, and at $\xi=\ell$ we obtain
\begin{align*}
&-\Big((G\tau_1)(\ell)\,(\rho^{-1}L_2)(\ell)+(G\tau_2)(\ell)\,(\rho^{-1}L_1)(\ell)\Big)
+J^{-1}\Big(L_{b1}\,(G\tau_2)(\ell)+L_{b2}\,(G\tau_1)(\ell)\Big)\\
&\qquad
=-J^{-1}\Big((G\tau_1)(\ell)\,L_{b2}+(G\tau_2)(\ell)\,L_{b1}\Big)
+J^{-1}\Big(L_{b1}\,(G\tau_2)(\ell)+L_{b2}\,(G\tau_1)(\ell)\Big)=0.
\end{align*}
Therefore
\[
\langle x_1, Dx_2\rangle_X=-\langle Dx_1, x_2\rangle_X
\qquad\text{for all }x_1,x_2\in\dom(D),
\]
and hence $D$ is skew-symmetric.

{\em Step 2:} $D^*=-D$.
Our result from the first step implies that $\dom(D)\subset \dom(D^*)$.
To prove skew-adjointness of $D$, it therefore suffices to show that
$\dom(D^*)\subset \dom(D)$. Let 
\[
x=\left(\begin{smallmatrix}\tau\\L\\L_b\end{smallmatrix}\right)\in\dom(D^*).
\]
By definition of the adjoint, there exists $x_1\in X$ such that
\begin{equation}\label{eq:Dstar-def}
\langle x, Dx_2\rangle_X=\langle x_1, x_2\rangle_X
\qquad\text{for all }x_2\in\dom(D).
\end{equation}
We show that $x\in\dom(D)$. In the following, we partition $x_1,x_2\in X$ as in \eqref{eq:x1x2}.\\
\emph{(a) Interior regularity.}
Choose $x_2\in\dom(D)$ with $L_{b2}=0$ and such that
$G\tau_2, \rho^{-1}L_2\in C_0^\infty((0,\ell),\R)$.
Then all boundary terms vanish. Using integration by parts in
\eqref{eq:Dstar-def}, we obtain
\begin{align*}
0&= \langle x, Dx_2\rangle_X -\langle x_1, x_2\rangle_X\\
&= \int_0^\ell \Big(
G\tau\,\Big(-\tfrac{\partial}{\partial\xi}(\rho^{-1}L_2)\Big)
+\rho^{-1}L\,\Big(-\tfrac{\partial}{\partial\xi}(G\tau_2)\Big)
\Big)\,d\xi -
\int_0^\ell \Big(G\tau_1\,\tau_2+\rho^{-1}L_1\,L_2\Big)\,d\xi \\
&= \int_0^\ell \Big(
\tfrac{\partial}{\partial\xi}(G\tau)\,\rho^{-1}L_2
+\tfrac{\partial}{\partial\xi}(\rho^{-1}L)\,G\tau_2
\Big)\,d\xi
-
\int_0^\ell \Big(G\tau_1\,\tau_2+\rho^{-1}L_1\,L_2\Big)\,d\xi .
\end{align*}
Since $G\tau_2$ and $\rho^{-1}L_2$ can be chosen arbitrarily and independently in
$C_0^\infty((0,\ell),\R)$, it follows (in the sense of distributions) that $\rho^{-1}L, G\tau\in W^{1,2}([0,\ell],\R)$
with
\begin{equation}\label{eq:bulk-identification-step2}
\tau_1=\tfrac{\partial}{\partial\xi}(\rho^{-1}L),
\qquad
L_1=\tfrac{\partial}{\partial\xi}(G\tau),
\end{equation}
\emph{(b) Boundary conditions.}
Now let $x_2\in\dom(D)$ be arbitrary.
Using \eqref{eq:bulk-identification-step2} and integration by parts, we obtain
\begin{multline*}
\langle x, Dx_2\rangle_X
=\int_0^\ell \Big(
\tfrac{\partial}{\partial\xi}(G\tau)\,\rho^{-1}L_2
+\tfrac{\partial}{\partial\xi}(\rho^{-1}L)\,G\tau_2
\Big)\,d\xi\\
-\Big[ G\tau\,\rho^{-1}L_2+\rho^{-1}L\,G\tau_2\Big]_0^\ell
+J^{-1}L_b\,(G\tau_2)(\ell),
\end{multline*}
whereas
\[
\langle x_1,x_2\rangle_X
=\int_0^\ell \Big(G\tau_1\,\tau_2+\rho^{-1}L_1\,L_2\Big)\,d\xi
+J^{-1}\Gamma^{-1}L_{b1}\,L_{b2}.
\]
Using \eqref{eq:bulk-identification-step2}, the integrals coincide, and
\eqref{eq:Dstar-def} reduces to the boundary identity
\begin{equation}\label{eq:boundary-identity-step2}
-\Big[ G\tau\,\rho^{-1}L_2+\rho^{-1}L\,G\tau_2\Big]_0^\ell
+J^{-1}L_b\,(G\tau_2)(\ell)
=
J^{-1}\Gamma^{-1}L_{b1}\,L_{b2}
\qquad\text{for all }x_2\in\dom(D).
\end{equation}
\emph{(b1) Condition $(G\tau)(0)=0$.}
Choose $x_2\in\dom(D)$ with $\tau_2\equiv 0$ and $L_{b2}=0$, and let
$\rho^{-1}L_2\in W^{1,2}([0,\ell],\R)$ satisfy
$(\rho^{-1}L_2)(\ell)=0$ and $(\rho^{-1}L_2)(0)=1$.
Then \eqref{eq:boundary-identity-step2} yields
\[
    (G\tau)(0)=0.
\]
\emph{(b2) Condition $(\rho^{-1}L)(\ell)=J^{-1}L_b$.}
Choose $x_2\in\dom(D)$ with $L_2\equiv 0$ and $L_{b2}=0$, and let
$G\tau_2\in W^{1,2}([0,\ell],\R)$ satisfy $(G\tau_2)(0)=0$ and $(G\tau_2)(\ell)=1$.
Then \eqref{eq:boundary-identity-step2} gives
\[(\rho^{-1}L)(\ell)=J^{-1}L_b.\]
\emph{(c) Conclusion.}
From (a) and (b) we obtain that $x\in \dom(D)$. Altogether, this shows that $\dom(D^*)\subset\dom(D)$. Since, in Step~1, we have shown that any $x\in\dom(D)$ fulfills $x\in\dom(D^*)$ with $D^*x=-Dx$, we obtain that $D^*=-D$.
\item Maximal monotonicity of $B$ follows from its product structure. 
More precisely, $B$ can be written as the Cartesian product of
the zero relation $\setdef{(\tau,0)}{\tau\in L^2([0,\ell],\R)}$,
the Nemytskii relation $\widetilde F_d : L^2([0,\ell])\rightrightarrows L^2([0,\ell])$ defined in \eqref{eq:Nemytskii},
and the relation $F_e:\R\rightrightarrows \R$.\\
The zero relation is trivially maximal monotone.
Maximal monotonicity of $\widetilde F_d$ follows from Proposition~\ref{prop:distrdamp}.
The additional factor $\rho^{-1}$ appearing in the argument of $\widetilde F_d$
is consistent with the weighted inner product on $X$ and therefore does not affect maximal monotonicity.
Moreover, $F_e$ is maximal monotone by Assumption~\ref{assump}.
Since the Cartesian product of maximal monotone relations is again maximal monotone,
it follows that $B$ is maximal monotone.\\
Furthermore, Proposition~\ref{prop:distrdamp} yields
\[
\dom(B)=L^2([0,\ell],\R)^2\times \dom(F_e).
\]
As $0\in\operatorname{int}\dom(F_e)$ by Assumption~\ref{assump}, we conclude that
$0\in\operatorname{int}\dom(B)$.
\item We choose 
\[p:=\left(\begin{smallmatrix}G^{-1}\\0\\0\end{smallmatrix}\right)\in L^\infty([0,\ell],\R)\times\{0\}\times\{0\}\subset X.\]
It can be immediately seen that $p$ has the desired properties.\qedhere
\end{enumerate}
\end{proof}

\subsection{Solution theory of the related wave equation}

In this section, we present an explicit solution to the drill string model \eqref{drillstring}, as we need a dynamic equation describing the evolution of the output $y(t)$ in the analysis of the control design. We emphasize that the implementation of the controller does not require an explicit solution; it utilizes only the measured output~$z(t)$ from the drill string model. 

For controller design, we restrict ourselves to the case where the physical parameters of the drill string \eqref{drillstring}, namely $\rho$ and $G$, are constant and the damping parameters $F_d$ and $F_e$ are single-valued functions. The succeeding considerations are based on the following theorem for the solution of an inhomogeneous wave equation.

\begin{thm}[adapted from \cite{folland2020introduction}, Theorem (5.25)]\label{thm:non-homogeneous wave}
Consider the initial-value problem for the nonhomogeneous wave equation
\begin{equation}\label{eq:wave}
    \phi_{tt}(\xi,t) - c^2 \phi_{\xi\xi}(\xi,t) = h(\xi,t), \quad \xi \in (0,\ell), \quad t > 0,
\end{equation}
with initial conditions
\begin{equation}\label{eq:wave-ic}
    \phi(\xi,0) = \phi_0(\xi), \quad \phi_t(\xi,0) = v_0(\xi),
\end{equation}
where $c>0$, $h\in C^2(\mathbb{R} \times \mathbb{R}_{\ge 0},\R)$, $\phi_0 \in C^2([0,\ell],\R)$, and $v_0 \in C^1([0,\ell],\R)$. Then, the solution to~\eqref{eq:wave},~\eqref{eq:wave-ic} is given by
\begin{equation}\label{sol:non-hom}
    \phi(\xi,t) = \frac{1}{2} \big( \phi_0(\xi - ct) + \phi_0(\xi + ct) \big) + \frac{1}{2c} \int_{\xi - ct}^{\xi + ct} v_0(s) ds+\frac{1}{2c}\int_0^t\int_{\xi-c(t-\tau)}^{\xi+c(t-\tau)}h(s,\tau)ds\,d\tau.
\end{equation}
\end{thm}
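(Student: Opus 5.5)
We verify directly that the function $\phi$ given by~\eqref{sol:non-hom} is a classical solution of~\eqref{eq:wave},~\eqref{eq:wave-ic}. Implicitly, $\phi_0$ and $v_0$ are regarded as extended to all of $\R$ with the same regularity, e.g.\ by a $C^2$ resp.\ $C^1$ extension, since the arguments $\xi\pm ct$ leave $[0,\ell]$. By linearity, $\phi=\phi_1+\phi_2+\phi_3$, where $\phi_1$ is the d'Alembert term in $\phi_0$, $\phi_2$ the term in $v_0$, and $\phi_3$ the Duhamel term in $h$. Each part will be treated separately.

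\emph{Homogeneous part.} For $\phi_1(\xi,t)=\tfrac12(\phi_0(\xi-ct)+\phi_0(\xi+ct))$, the chain rule gives $\partial_t^2\phi_1=\tfrac{c^2}{2}(\phi_0''(\xi-ct)+\phi_0''(\xi+ct))=c^2\partial_\xi^2\phi_1$. Moreover $\phi_1(\xi,0)=\phi_0(\xi)$ and $\partial_t\phi_1(\xi,0)=\tfrac c2(-\phi_0'(\xi)+\phi_0'(\xi))=0$. For $\phi_2=\tfrac1{2c}(V(\xi+ct)-V(\xi-ct))$ with $V'=v_0$, we have $V\in C^2$, so the same computation yields the wave equation. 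Its initial data are $\phi_2(\xi,0)=0$ and $\partial_t\phi_2(\xi,0)=\tfrac12(v_0(\xi)+v_0(\xi))=v_0(\xi)$.

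\emph{Duhamel part.} Write $\phi_3(\xi,t)=\int_0^t w(\xi,t;\tau)\,d\tau$ with
\[
w(\xi,t;\tau)=\frac1{2c}\int_{\xi-c(t-\tau)}^{\xi+c(t-\tau)}h(s,\tau)\,ds .
\]
For fixed $\tau$, $w(\cdot,\cdot;\tau)$ is the $\phi_2$-type solution of the homogeneous wave equation started at time $\tau$. It satisfies $w(\xi,\tau;\tau)=0$ and $\partial_tw(\xi,\tau;\tau)=h(\xi,\tau)$. Differentiating under the integral sign, which is justified by continuity of $h$ and its derivatives via Leibniz's rule, gives
\[
\partial_t\phi_3=w(\xi,t;t)+\int_0^t\partial_tw\,d\tau=\int_0^t\partial_tw\,d\tau .
\]
Differentiating once more,
\[
\partial_t^2\phi_3=h(\xi,t)+\int_0^t\partial_t^2w\,d\tau .
\]
Also $\partial_\xi^2\phi_3=\int_0^t\partial_\xi^2w\,d\tau$. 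Since $\partial_t^2w=c^2\partial_\xi^2w$, it follows that $\partial_t^2\phi_3-c^2\partial_\xi^2\phi_3=h$. Finally, $\phi_3(\xi,0)=0$ and $\partial_t\phi_3(\xi,0)=0$.

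\emph{Regularity and uniqueness.} Adding the three parts shows that $\phi\in C^2$ solves~\eqref{eq:wave},~\eqref{eq:wave-ic}. If ``the solution'' is meant to include uniqueness, we use the energy method on the difference $\delta$ of two solutions, which solves the homogeneous problem with zero data. The standard domain-of-dependence estimate $\frac{d}{dt}\int_{\xi_0-c(T-t)}^{\xi_0+c(T-t)}(\delta_t^2+c^2\delta_\xi^2)\,d\xi\le0$ forces $\delta\equiv0$ on every backward characteristic cone.

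The main obstacle is purely bookkeeping: justifying differentiation under the double integral (Leibniz rule with variable limits) and clarifying that the formula is understood on $\R$, not just on $(0,\ell)$, i.e.\ before any boundary conditions are imposed. This is exactly the sense in which the result is cited from Folland, so no further argument is needed.
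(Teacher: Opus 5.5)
Your verification via d'Alembert's formula for the initial data plus Duhamel's principle for the source is correct, and it is essentially the standard argument behind the result in Folland that the paper cites; the paper gives no proof of its own. One caveat, which you partly flag yourself: since $\phi_0,v_0$ are given only on $[0,\ell]$ and no boundary conditions are imposed, your backward-cone energy argument yields uniqueness only on the characteristic triangle $\{(\xi,t): ct\le\xi\le\ell-ct\}$, whereas elsewhere in $(0,\ell)\times(0,\infty)$ the formula genuinely depends on the chosen extensions, so the statement holds as an existence statement for any such extension (or as uniqueness for the whole-line problem with the extended data), not as ``the'' solution on $(0,\ell)$.
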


\begin{rem}\label{smoothness of the solution}
If $c^2=\frac{G}{\rho}$ for constant $\rho(\cdot)$ and $G(\cdot)$, and $h(\xi,t)=-F_d(\xi,\phi_t(\xi,t))$,  $\xi\in[0,\ell]$ and $t\geq 0$, for locally integrable $F_d:\R^2\to\R$, then the wave equation~\eqref{eq:wave} in Theorem~\ref{thm:non-homogeneous wave} is the same as the PDE in~\eqref{drillstring} (without the boundary conditions). 
If $\phi_0\in C^2([0,\ell],\R)$ and $v_0\in C^1([0,\ell],\R)$, then $\phi\in C^2([0,\ell]\times \mathbb{R}_{\ge 0},\R)$ satisfies the wave equation in the classical sense. If $\phi_0$, $v_0$ are only locally integrable, then~$\phi$ satisfies the wave equation in the sense of distributions~\cite{evans2022partial,folland2020introduction}.
Furthermore, assume that $F_d(\cdot,v)$ is a twice differentiable function for every fixed~$v\in\R$. Then, iterating Theorem $(6.2)$ of \cite{loomis1968advanced} (chapter 3), we get that $F_d(\xi,\phi_t(\xi,t))$ is twice differentiable as well. Then the expression \eqref{sol:non-hom} with $h(\xi,t)$ being replaced with $-F_d(\xi,\phi_t(\xi,t))$ is the solution of the PDE equation in \eqref{drillstring} and can be expressed as 
\begin{align}\label{sol with d}
 \phi(\xi,t)= & \frac{1}{2} \big( \phi_0(\xi - ct) + \phi_0(\xi + ct) \big) + \frac{1}{2c} \int_{\xi - ct}^{\xi + ct} v_0(s) ds -\frac{1}{2c}\int_0^t\int_{\xi-c(t-\tau)}^{\xi+c(t-\tau)}F_d(s,\phi_t(s,\tau))ds\,d\tau. \end{align}
\end{rem}

It is worthwhile mentioning here that in the next section, we deal with the controller design and it's feasibility. Although the novel funnel controller which we introduce will only require the measured output signal $z$ introduced in \eqref{drillstring} which we assume to be achieved through measurements, we still require a dynamical  expression for  the output signal $y$, which will be then exploited in the proof of the feasibility of the controller. Upon computation, we get from~\eqref{sol with d} that
\begin{align}\label{eq:rep-y(t)}
                   y(t) = \phi_t(\ell,t) &= c(f_1(\ell+ct) + f_2(\ell-ct)) + \int_{0}^{t} (f_3 (\ell+ct, \tau) + f_4 (\ell-ct, \tau))\,d\tau,\\
               \label{eq:rep-z(t)}
                    z(t) = \phi_t(0,t) &= c(f_1(ct) + f_2(-ct)) + \int_{0}^{t} (f_3 (ct, \tau) + f_4 (-ct, \tau))\,d\tau,\\
                    \label{eq:rep-u(t)}
                    u(t) = \phi_\xi(0,t) &= f_1(ct) -f_2(-ct) + \int_{0}^{t} (f_3 (ct, \tau)- \int_{0}^{t} f_4 (-ct, \tau))\,d\tau,\\
                \label{eq:rep-phi-xi-l-t}
                    \phi_\xi(\ell,t) &= f_1(\ell+ct) - f_2(\ell-ct) + \int_{0}^{t} (f_3 (\ell+ct, \tau)- \int_{0}^{t} f_4 (\ell-ct, \tau))\,d\tau,
                \end{align}    
where\begin{align*}
f_1(\xi + ct) &= \frac{1}{2}\left(\phi_0' (\xi +ct) + v_0(\xi +ct)\right), \\
f_2(\xi - ct) &= \frac{1}{2}\left(v_0(\xi -ct) - \phi_0'(\xi -ct)\right), \\
f_3(\xi + ct, \tau) &= \frac{1}{2}F_d(\xi +c(t-\tau),\phi_t(\xi+c(t-\tau),\tau)), \\
f_4(\xi - ct, \tau) &= \frac{1}{2}F_d(\xi -c(t-\tau),\phi_t(\xi-c(t-\tau),\tau)).
\end{align*}
Combining the equations \eqref{eq:rep-y(t)}, \eqref{eq:rep-z(t)},\eqref{eq:rep-u(t)}, \eqref{eq:rep-phi-xi-l-t} with the third equation of \eqref{drillstring}, we get 
 \begin{equation}\label{eq:y-delaydiff}
                    J\dot{y}(t) = \frac{-G\Gamma}{c}\Bigg(y(t)+cu\left(t-\tfrac{l}{c}\right)-z\left(t-\tfrac{l}{c}\right)-\int_{t-\frac{l}{c}}^{t}  F_d(\ell-c(t-\tau),\phi_t(\ell-c(t-\tau),\tau))d\tau\Bigg) +F_e(y(t)).
                \end{equation}

\section{Controller design}\label{Sec:ContrDes}
In this section, we introduce a novel funnel controller for the output tracking of the considered drill string model \eqref{drillstring}. We recall that we restrict ourselves to the case of constant physical parameters $\rho(\cdot)$ and $G(\cdot)$. Set $c^2=\frac{G}{\rho}$ and $\omega := \ell/c$. We assume availability of the instantaneous signal $z(t)$. 

For a given reference signal $y_{\rm ref}\in W^{1,\infty}([-\omega,\infty),\R)$ the novel funnel control design is given by
\begin{equation}\label{eq:FC}
\boxed{
\begin{aligned}
    e(t) &= \frac{y(t-\omega) - y_{\rm ref}(t-\omega) + I(t)}{\psi(t-\omega)},\\
    v(t) &= k\, \frac{e(t)}{1-e(t)^2} + \left(\hat v - k\, \frac{e(0)}{1-e(0)^2}\right) p(t),\\
    \dot I(t) &= -\alpha I(t) - \beta\big(v(t) - v(t-2\omega)\big),\quad I(0) = 0,\\
    u(t) &= \frac{1}{c} z(t) + v(t),\\
\end{aligned}
}
\end{equation}
with the design parameters
\[
    \psi\in \Psi,\ \hat v\in\R,\ k>0,\ \alpha>0,\ \beta>0,
\]
and $p$ a smooth function with compact support and $p(0)=1$,
where
\[
    \Psi := \setdef{\psi\in W^{1,\infty}([-\omega,\infty),\R)}{\inf_{t\ge -\omega} \psi(t) > 0}.
\]
Furthermore, we assume that $v(t) = \hat v$ for $t\in [-2\omega,0]$. The value of $\hat v$ is determined so that the second of equations~\eqref{drillstring} is satisfied at $t=0$, that is 
\[
    \hat v := G \phi_\xi(0,0) - \frac{1}{c} v_0(0).
\]
The role of the function $p$ is to guarantee this initial condition, while its effect vanishes after a finite time and it can be seen as a bounded perturbation.

\begin{figure}[!ht]
\begin{center}
\begin{tikzpicture}[scale=0.45]
\tikzset{>=latex}
  \filldraw[color=gray!25] plot[smooth] coordinates {(0.15,4.7)(0.7,2.9)(4,0.4)(6,1.5)(9.5,0.4)(10,0.333)(10.01,0.331)(10.041,0.3) (10.041,-0.3)(10.01,-0.331)(10,-0.333)(9.5,-0.4)(6,-1.5)(4,-0.4)(0.7,-2.9)(0.15,-4.7)};
  \draw[thick] plot[smooth] coordinates {(0.15,4.7)(0.7,2.9)(4,0.4)(6,1.5)(9.5,0.4)(10,0.333)(10.01,0.331)(10.041,0.3)};
  \draw[thick] plot[smooth] coordinates {(10.041,-0.3)(10.01,-0.331)(10,-0.333)(9.5,-0.4)(6,-1.5)(4,-0.4)(0.7,-2.9)(0.15,-4.7)};
  \draw[thick,fill=lightgray] (0,0) ellipse (0.4 and 5);
  \draw[thick] (0,0) ellipse (0.1 and 0.333);
  \draw[thick,fill=gray!25] (10.041,0) ellipse (0.1 and 0.333);
  \draw[thick] plot[smooth] coordinates {(0,2)(2,1.1)(4,-0.1)(6,-0.7)(9,0.25)(10,0.15)};
  \draw[thick,->] (-2,0)--(12,0) node[right,above]{\normalsize$t$};
  \draw[thick,dashed](0,0.333)--(10,0.333);
  \draw[thick,dashed](0,-0.333)--(10,-0.333);
  \node [black] at (0,2) {\textbullet};
  \draw[->,thick](4,-3)node[right]{\normalsize$\lambda$}--(2.5,-0.4);
  \draw[->,thick](3,3)node[right]{\normalsize$(0,w(0))$}--(0.07,2.07);
  \draw[->,thick](9,3)node[right]{\normalsize$\psi(t)$}--(7,1.4);
\end{tikzpicture}
\end{center}
\caption{Error evolution in a funnel $\mathcal F_{\psi}$ with boundary $\psi(t)$.}
\label{Fig:funnel}
\end{figure}

In~\eqref{eq:FC}, $e(t)$ represents the normalized tracking error and $\psi(t)$ is a positive function that ensures that the corrected tracking error $w(t):= y(t) - y_{\rm ref}(t) + I(t+\omega)$ evolves in a prescribed performance funnel
\[
    \mathcal F_{\psi} = \setdef{(t,w)\in [-\omega,\infty)\times\R}{ |w| < \psi(t)},
\]
see Fig.~\ref{Fig:funnel}. By the properties of~$\Psi$ there exists $\lambda>0$ such that $\psi(t)\ge \lambda$ for all $t\ge 0$. It is important to note that the function $\psi\in\Psi$ is a user-defined design parameter in the controller~\eqref{eq:FC}. Although~$\psi$ does not need to be monotonically decreasing in general, it is usually convenient to choose it of the form $\psi(t) = a e^{-b t} + d$ with positive parameters $a,b,d$. Other typical choices for funnel boundaries are outlined in~\cite[Sec.~3.2]{Ilchmann2012DecentralizedTO}.

$I(t)$ is an auxiliary variable introduced as a correction term for the tracking error, necessary due to the wave traveling time (treated as a delay) of magnitude $\omega$ in the dynamics~\eqref{eq:y-delaydiff}. The third of equations~\eqref{eq:FC} dynamically adjusts the delay-dependent correction term to actively compensate for the delayed influence of input $u(t)$ on the to be controlled output $y(t)$. Such a term has been first used in~\cite{BergBika25}, which inspired the design~\eqref{eq:FC}. Since $\psi \in \Psi$, it is always positive and bounded away from zero, stressing that asymptotic tracking is not pursued here. The function $v(t)=k\frac{e(t)}{1-e(t)^2}$ transforms the tracking error $e(t)$ into the control-related variable $v(t)$ and $k$ is a constant parameter. The denominator in~$v(t)$ ensures that $e(t)$ stays within $(-1,1)$ for proper operation, meaning the controller guarantees the evolution of the corrected error~$w$ within the funnel $\mathcal F_{\psi}$. The term involving $\beta$ in the ODE for~$I$ introduces an effect based on past values of $v(t)$, helping to shape the control response over time.

The following assumptions are made on the drill string system parameters, the reference signal and controller parameters.

\begin{assumption}\label{assump for the controller}\
 \begin{enumerate}[label=(\roman*),ref=(\roman*)]
    \item $y_{\rm ref}\in W^{1,\infty}([-\omega,\infty),\mathbb{R})$, $F_e \in L^\infty(\mathbb{R},\R)$ and $v\mapsto F_e(v)$ is monotone.
    \item The parameters $\rho(\cdot)$ and $G(\cdot)$ in~\eqref{drillstring} are constant and the initial conditions satisfy $\phi_0 \in C^2([0,\ell],\R)$ and $v_0 \in C^1([0,\ell],\R)$.
    \item  $F_d(\cdot,\cdot)\in L^\infty(\mathbb{R}^2,\R)$, $\xi\mapsto F_d(\xi, v)$ is twice continuously differentiable for all $v\in\R$, and $v\mapsto F_d(\xi,v)$ is monotone for a.a. $\xi\in\R$.
     \item the initial history $y|_{[-\omega,0]} = y_{\rm hist}\in C^1([-\omega,0],\R)$ is such that all signals in~\eqref{eq:FC} are well-defined for $t\in [-\omega,\omega]$ (with $I(t)=0$ for $t\le 0$) and, in particular, $|e(t)| < 1$.
\end{enumerate}   
\end{assumption}

Regarding the assumption on the initial history, note that usually the controller is switched on when the system is at rest (i.e., a zero initial condition is reached).

\begin{remark}
A straightforward verification yields that under Assumption~\ref{assump for the controller}, Assumption~\ref{assump} is satisfied as well. The differentiability of $\xi\mapsto F_d(\xi,v)$, $v\in\R$, implies the measurability required in
Assumption~\ref{assump}\,\ref{assumpb}\,\ref{assumpb3}. The growth condition on $F_d$ is satisfied due to boundedness. Monotonicity of $F_e$ and $F_d$ are also satisfied via direct assumption. 
\end{remark}

Next, we analyze feasibility of the proposed control design \eqref{eq:FC}. Note that by~\eqref{eq:rep-y(t)}--\eqref{eq:rep-u(t)} the unknown value of $y(t-\omega)$ can be computed as 
\begin{align*}
      y(t-\omega) = \frac12 \Bigg( cu(t) - cu(t-2\omega) + z(t) + z(t-2\omega) - \int_{t-\omega}^t  F_d(c(t-s),\phi_t((t-s),s) {\rm d}s \\+ \int_{t-2\omega}^{t-\omega} F_d(2\ell-c(t-s),\phi_t(\ell-c(t-s),s) {\rm d}s  \bigg ).
\end{align*}
In case of absence of damping ($F_d=0$), the value of $e(t)$ in~\eqref{eq:FC} can hence be determined by inserting the above relation into the first equation of~\eqref{eq:FC} and solving for $e(t)$. This is indeed always possible (e.g.\ using Newton's method), since we have
\begin{align*}
    & \psi(t-\omega) e(t) = y(t-\omega) - y_{\rm ref}(t-\omega) + I(t)\\
    \iff\quad & \psi(t-\omega) e(t) =  \frac12 \big( cv(t) - cv(t-2\omega) + 2z(t)\big) - y_{\rm ref}(t-\omega) + I(t)\\
    \iff\quad & \psi(t-\omega) e(t) - \frac{ck}{2} \frac{e(t)}{1-e(t)^2} = -\frac{c}{2} v(t-2\omega) + z(t) - y_{\rm ref}(t-\omega) + I(t)
\end{align*}
In order to show that the above equation is always solvable for $e(t)$ with $|e(t)|<1$, consider the function 
\[
    F_t:(-1,1)\to\R,\ e\mapsto \psi(t-\omega) e - \frac{ck}{2} \frac{e}{1-e^2},
\]
which has derivative 
\[
    F_t'(e) = \psi(t-\omega) - \frac{ck}{2} \frac{1+e^2}{(1-e^2)^2} \le \psi(t-\omega) - \frac{ck}{2} < 0
\]
for 
\[
    k > \frac{2\|\psi\|_\infty}{c}.
\]
Under this condition, the controller equations~\eqref{eq:FC} can always be uniquely solved for~$e(t)$. Now in case of bounded damping (by Assumption~\ref{assump for the controller}), we have that
\[
    |F_d(\cdot,\cdot)| \leq F_{d_{\max}},\quad F_{d_{\max}}>0.
\]
This leads to the modified equation
\[
    F_t(e(t)) = R(t) + \Delta(t),
\]
with $F_t$ as above, known
\[
    R(t) := -\frac{c}{2} v(t-2\omega) + z(t) - y_{\rm ref}(t-\omega) + I(t)
\]
and \textit{unknown} damping-induced perturbation $\Delta(t)$, which satisfies
\[
    |\Delta(t)| \leq 2\omega F_{d_{\max}}.
\]
For controller implementation, $e(t)$ must again be computed as the solution of $F_t(e(t)) = R(t)$ since $\Delta(t)$ is unknown. The deviation of $e(t)$ from the ``real error'' $\tilde{e}(t)$ satisfying $F_t(\tilde e(t)) = R(t) + \Delta(t)$ is small whenever the magnitude $d_{\max}$ of the distributed damping is small or the parameter~$k$ is large: Applying the mean value theorem we obtain
\[
    |e(t) - \tilde{e}(t)| \leq \frac{|\Delta(t)|}{\min_{e \in (-1,1)} |F_t'(e)|}\le \frac{2\omega F_{d_{\max}}}{\tfrac{ck}{2}-\|\psi\|_\infty}.
\]
In this case, $e(t)$ approximates the true error $\tilde e(t)$ with the above quality. In practice this effect will be additionally superposed by measurement noise. Thus, if the term $\tfrac{2\omega F_{d_{\max}}}{\tfrac{ck}{2}-\|\psi\|_\infty}$ is of the same magnitude as the expected measurement noise, then the approximation error $ |e(t) - \tilde{e}(t)|$ cannot be distinguished from this noise.
In the following we present the main result on the feasibility of the controller. It is important to stress that application of the controller \eqref{eq:FC} to the model \eqref{drillstring} leads to a closed-loop system given by

\begin{equation}\label{eq:closed loop}
\begin{aligned}
 \phi_{tt}(\xi,t)&=c^2 \phi_{\xi\xi}(\xi,t)-F_d(\xi,\phi_t(\xi,t)),\\
     J \dot{y}(t) &= \frac{-G\Gamma}{c} \bigg( y(t) + cv(t - \omega) 
    - \int_{t - \omega}^{t} F_d(\ell - c(t - s),\, \phi_t(\ell - c(t - s), s))\, ds \bigg) + F_e(y(t)), \\
    \dot{I}(t) &= -\alpha I(t) + \beta\big(v(t) - v(t - 2\omega)\big), \\
    v(t) &= \frac{ke(t)}{1 - e(t)^2} + \left(\hat v - k\, \frac{e(0)}{1-e(0)^2}\right) p(t),\quad e(t) = \frac{y(t-\omega) - y_{\rm ref}(t-\omega) + I(t)}{\psi(t-\omega)},\\
    G\phi_{\xi}(0,t)&=\frac{1}{c} \phi_t(0,t) + v(t),\quad   \phi_t(\ell,t) =y(t),\\
    \phi(\xi,0)&=\phi_0(\xi),\quad \phi_t(\xi,0)=v_0(\xi),\quad y|_{[-2\omega,0]} = y_{\rm hist},\quad I(0)=0.
\end{aligned}
\end{equation}
In the main result we prove existence of a global and bounded solution of the above closed-loop system.

\begin{thm}\label{Thm:funnel}
Consider the drill string system~\eqref{drillstring} under the application of the funnel controller~\eqref{eq:FC}, where \( \alpha = \frac{G\Gamma}{cJ} \) and \( \beta = \frac{G\Gamma}{J} \). Assume that Assumption \ref{assump for the controller} holds. Then we have:
\begin{enumerate}[label=(\roman*),ref=(\roman*)]
    \item The closed-loop system~\eqref{eq:closed loop}  has a solution, and every maximal solution is global and bounded with $(\phi,y,I)\in C([-\omega,\infty),L^2([0,\ell],\R))\times W^{1,\infty}([-\omega,\infty),\R)\times W^{1,\infty}(\R_{\ge 0},\R)$ and, in particular, $\phi_t,\phi_\xi\in C([-\omega,\infty),L^2([0,\ell],\R))$.
    \item The input~$v$ is bounded.
    \item The system output exhibits a prescribed performance in the sense that, for all $t\ge 0$,
    \[
    |y(t-\omega) - y_{\rm ref}(t-\omega) + I(t)| < \psi(t-\omega).
  \]
\end{enumerate}
\end{thm}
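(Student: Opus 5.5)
The core of the argument is an observation about the corrected error $w(t):=y(t-\omega)-y_{\rm ref}(t-\omega)+I(t)=\psi(t-\omega)e(t)$. With the choices $\alpha=\frac{G\Gamma}{cJ}$ and $\beta=\frac{G\Gamma}{J}$, the delayed term $v(t-2\omega)$ appears both in the $y$-dynamics and in the $I$-dynamics, and these two appearances cancel.

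Evaluate the second equation of \eqref{eq:closed loop} at $t-\omega$ and add the $I$-equation in the form of \eqref{eq:FC}, i.e.\ $\dot I=-\alpha I-\beta(v(t)-v(t-2\omega))$. The $y$-dynamics contribute $-\beta v(t-2\omega)$ and the $I$-dynamics contribute $+\beta v(t-2\omega)$. This should give
\[
\dot w(t) = -\alpha w(t) -\beta v(t) + d(t),
\]
where
\[
d(t) := -\alpha y_{\rm ref}(t-\omega) - \dot y_{\rm ref}(t-\omega) + \tfrac1J F_e(y(t-\omega)) + \tfrac{G\Gamma}{cJ}\int_{t-2\omega}^{t-\omega} F_d(\cdots)\,ds .
\]
By Assumption~\ref{assump for the controller}, $d$ is bounded:
\[
\|d\|_\infty \le \alpha\|y_{\rm ref}\|_\infty+\|\dot y_{\rm ref}\|_\infty+\tfrac1J\|F_e\|_\infty+\tfrac{G\Gamma\omega}{cJ}\|F_d\|_\infty .
\]
This bound uses only the boundedness of $F_e$ and $F_d$, never any bound on $y$. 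I have to be careful with the sign of the $\beta$-term: \eqref{eq:FC} and \eqref{eq:closed loop} differ there, and the cancellation works only with the sign of \eqref{eq:FC}, which I will use.

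\emph{Local existence.} I proceed by the method of steps on intervals $[n\omega,(n+1)\omega]$. On each such interval, $y(\cdot-\omega)$ and $v(\cdot-2\omega)$ are already known from the past, or from $y_{\rm hist}$ and $\hat v$. Hence the pair ($I$-equation, definition of $e$) is a scalar ODE for $I$:
\[
\dot I = -\alpha I - \beta\Big(k\tfrac{e}{1-e^2}+\text{known}\Big)+\beta v(t-2\omega), \qquad e=\frac{y(t-\omega)-y_{\rm ref}(t-\omega)+I}{\psi(t-\omega)} .
\]
Its right-hand side is locally Lipschitz on the open set $|e|<1$, so it has a maximal solution with $|e(t)|<1$. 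Given $v$ on that interval, the PDE part has the boundary law $G\phi_\xi(0,t)=\frac1c\phi_t(0,t)+v(t)$. This is the open-loop model with an extra dissipative boundary term $\frac1c z$ at $\xi=0$. To handle it, I move this term into the monotone part $B$ as a boundary damping term, or I solve directly with the d'Alembert representation of Remark~\ref{smoothness of the solution}; the transparent boundary condition makes the left-going wave absent. In either case I obtain $\phi$ with $\phi_t,\phi_\xi\in C(\cdot,L^2)$ as in Theorem~\ref{thm:open_loop_existence}, and $y$ satisfies \eqref{eq:y-delaydiff}.

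\emph{The funnel is never left.} This is the main step. Fix a maximal solution on $[0,T^*)$ and let $\lambda\le\psi$. The perturbation $(\hat v-k\frac{e(0)}{1-e(0)^2})p(t)$ is bounded and vanishes after finite time, so it can be absorbed into $d$. I then choose $\varepsilon\in(0,1)$ so close to $1$ that
\[
\beta k\frac{\varepsilon}{1-\varepsilon^2} \ge \alpha\|\psi\|_\infty+\|d\|_\infty+\|\dot\psi\|_\infty+1 ,
\]
and also $\varepsilon>\sup_{[0,\omega]}|e|$. Whenever $e(t)\ge\varepsilon$, the derivative $\frac{d}{dt}(\psi(t-\omega)e(t))-\dot\psi(t-\omega)e(t)$ is negative enough that $\dot e<0$; the case $e\le-\varepsilon$ is symmetric. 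The usual contradiction argument on the first exit time then gives $|e(t)|\le\varepsilon$ on $[0,T^*)$. Consequently $v$ is bounded and $I$ is bounded: $I$ obeys a stable linear ODE with bounded forcing.

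\emph{Globality and boundedness.} With $v$ bounded, the $y$-equation has the form $J\dot y=-\frac{G\Gamma}{c}y+\text{bounded}$, since $F_e$, $F_d$ and $v$ are bounded. So $y$ and $\dot y$ are bounded, and the energy estimate with the bounded boundary input keeps the $L^2$-norms of $\phi_t,\phi_\xi$ finite on bounded intervals. No blow-up occurs, so $T^*=\infty$, which gives (i)--(iii). Assertion (iii) is just $|e|<1$.

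I expect the delicate points to be the well-posedness of the PDE with the feedback boundary term, and checking the exact sign convention behind the cancellation in $\dot w$.
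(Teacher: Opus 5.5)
Your proposal is correct and follows essentially the same route as the paper's proof. The shared steps are the method of steps on intervals of length $\omega$, the cancellation of $v(t-2\omega)$ enabled by $\beta=c\alpha$ (with the sign of the $I$-equation from \eqref{eq:FC}, which is what the paper's Step~4 uses), a uniform bound on the disturbance built from $y_{\rm ref},\dot y_{\rm ref},F_e,F_d,\hat p$, and the usual exit-time argument giving $|e|\le\varepsilon<1$; working with $w=\psi(\cdot-\omega)e$ instead of $e$ is purely cosmetic.

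Your choice $\varepsilon>\sup_{[0,\omega]}|e|$ and your remark that the closed-loop law $G\phi_\xi(0,t)=\frac1c\phi_t(0,t)+v(t)$ is not literally covered by Theorem~\ref{thm:open_loop_existence} address points the paper glosses over. One adjustment: the trace term $\frac1c\phi_t(0,t)$ belongs in the linear part, not in $B$. It changes the boundary condition in $\ker\mathfrak p$ and makes that operator maximal monotone rather than skew-adjoint, and the proof of Lemma~\ref{lem:Atlam-translation-unbdd} works equally in that case.
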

\begin{proof}
We divide the proof into an initialization step and an inductive continuation on subintervals of size \(\omega = \frac{l}{c}\), using the delay structure in the system and the funnel controller's recursive construction.

\textit{Step 1.} (Initialization on \([0, \omega]\)). By Assumption~\ref{assump for the controller}\,(iv), the history \( y_{\mathrm{hist}} \in C^1([-\omega, 0],\R) \) satisfies, since $I(t)=0$ for $t\le 0$,
\[
|e(t)| = \left| \frac{y_{\mathrm{hist}}(t) - \yref(t)}{\psi(t)} \right| < 1 \quad \text{for all } t \in [-\omega, 0].
\]
Recall that \( v(t) = \hat v \) for \( t \le 0 \) and  \( I(0) = 0 \). On \([0, \omega]\), we have the error \( e(t) = \frac{y_{\mathrm{hist}}(t-\omega) - \yref(t-\omega) + I(t)}{\psi(t-\omega)} \), and again by Assumption~\ref{assump for the controller}\,(iv) we have that $|e(t)|<1$ for all $t\in[0,\omega]$ and $v(t)$, ${I}(t)$ are well-defined and satisfy the differential equation
\[
\dot I(t) = -\alpha I(t) - \beta \big(v(t) - \hat v\big)
\]
on \([0, \omega]\). By properties of $y_{\mathrm{hist}}, y_{\rm ref}$ and $\psi$ as well as bounded of $v$ and $I$ (as continuous functions on $[0,\omega]$) it follows in a straightforward way that $\dot v$ and $\dot I$ are bounded on $[0,\omega]$.

\textit{Step 2.} (PDE solution).
Consider the input $u(t) = v(t) + \frac{1}{c}z(t)$, which by~\eqref{drillstring} can be reformulated as the boundary condition $G\phi_\xi(0,t) - \frac{1}{c}\phi_t(0,t) = v(t)$. By construction of $v(t)$, this boundary condition is satisfied at $t=0$. Since $v\in W^{1,\infty}([0,\omega],\R)$, $\dom(F_e)=\R$ and $W^{1,2}([0,\ell],\R)$ is dense in $L^2([0,\ell],\R)$ we have that $\overline{\dom(D_f)\cap\dom(B)}=X$, thus it follows from Theorem \ref{thm:open_loop_existence} that there exists  a unique generalized solution \( \phi \in C([0, \omega], L^2([0,\ell],\R)) \) with $\phi_t,\phi_\xi \in C([0, \omega], L^2([0,\ell],\R))$.  Since $v=\hat v$ on $[-\omega,0]$, a similar argument leads to an extension of the solution in negative time, so we have \( \phi \in C([-\omega, \omega], L^2([0,\ell],\R)) \).

\textit{Step 3.} (State evolution). We consider the second of equations~\eqref{eq:closed loop}, determining the output $y$ on $[0,\omega]$. Let \( D_\phi(t) = \int_{t - \omega}^{t} F_d(\ell - c(t - s), \phi_t(\ell - c(t - s), s))\, ds \), which is well-defined by Step~2. Then, again using $v(t) = \hat v$ for $t\le 0$, we have
\[ J \dot{y}(t) = \frac{-G\Gamma}{c} \big( y(t) + c \hat v - D_\phi(t) \big) + F_e(y(t)) \]
on $[0,\omega]$. Since  $F_d$ and $F_e$ are bounded by Assumption~\ref{assump for the controller}, there exists a solution $y$ of the above differential equation on $[0,\omega]$ by standard existence results for ordinary differential equations (ODEs) combined with the variation of constants formula for linear ODEs (by which $y$ cannot exhibit blow-up on $[0,\omega]$). It follows that \( y \in W^{1,\infty}([0, \omega],\R) \).

\textit{Step 4.} (Inductive continuation~-- error dynamics).
Assume that for some \( K \in \mathbb{N} \), the solution $(\phi, y, I)$ exists on \( [0, K\omega] \) with \( |e(t)| < 1 \) for all $t \in [0, K\omega]$. We now extend the solution to \( [K\omega, (K+1)\omega] \). In order to do this, we construct a differential equation for the error $e$ and show that it has a solution on \( [K\omega, (K+1)\omega] \). Observe that by~\eqref{eq:FC} we have
\begin{align*}
    \dot e(t) &= \frac{\dot y(t-\omega) - \dot y_{\rm ref}(t-\omega) + \dot I(t)}{\psi(t-\omega)} - \frac{y(t-\omega) - \yref(t-\omega) + I(t)}{\psi(t-\omega)^2} \dot \psi(t-\omega)\\
    &= \frac{1}{\psi(t-\omega)}\bigg(\frac{-G\Gamma}{cJ} \big( y(t-\omega) + c v(t - 2\omega) - D_\phi(t-\omega) \big) + \frac{1}{J}F(y(t-\omega)) \\
    &\quad - \dot y_{\rm ref}(t-\omega) - \alpha I(t) - \beta(v(t) - v(t-2\omega)) - e(t) \dot \psi(t-\omega)\bigg)
\end{align*}
Now, substitute \( y(t-\omega) = \psi(t-\omega)e(t) + \yref(t-\omega) - I(t) \) and use the definitions \( \alpha = \frac{G\Gamma}{cJ} \) and \( \beta = \frac{G\Gamma}{J} \), which implies \( \beta = c\alpha \), thus
\begin{align*}
    \dot e(t) &= \frac{1}{\psi(t-\omega)}\bigg(-\alpha \big( \psi(t-\omega)e(t) + \yref(t-\omega) - D_\phi(t-\omega) \big) + \frac{1}{J}F_e(y(t-\omega)) \\
    &\quad - \dot y_{\rm ref}(t-\omega) - c\alpha v(t)  - e(t) \dot \psi(t-\omega)\bigg).
\end{align*}
Next, we substitute the control law $v(t) = \frac{ke(t)}{1 - e(t)^2} + \hat p(t)$ , where $\hat p(t) := \left(\hat v - k\, \frac{e(0)}{1-e(0)^2}\right) p(t)$, giving
\begin{align*}
    \dot e(t) &= \frac{1}{\psi(t-\omega)}\bigg(-\alpha \big( \psi(t-\omega)e(t) + \yref(t-\omega) - D_\phi(t-\omega) \big) + \frac{1}{J}F_e(y(t-\omega)) \\
    &\quad - \dot y_{\rm ref}(t-\omega) - c\alpha k \frac{e(t)}{1 - e(t)^2} - c\alpha \hat p(t) - e(t) \dot \psi(t-\omega)\bigg).
\end{align*}
Now we see that, since the signals $y, \dot y_{\rm ref}, \psi, \dot \psi, D_\phi$ are defined on $[0, K\omega]$, the right hand side of the above differential equation is well defined, as a function of $t$ and $e$, for $(t,e)\in [K\omega, (K+1)\omega]\times (-1,1)$, hence there exists a maximal solution $e:[0,\kappa)\to\R$ with $\kappa\in (K\omega, (K+1)\omega]$ of it, which satisfies $|e(t)|<1$ for all $t\in [0,\kappa)$. 

Next we show that $\kappa = (K+1)\omega$. To this end, let \( M(t) := -\alpha \big( \psi(t-\omega)e(t) + \yref(t-\omega) - D_\phi(t-\omega) \big) + \frac{1}{J}F(y(t-\omega)) - \dot y_{\rm ref}(t-\omega) - c\alpha \hat p(t) \).
Under Assumption~\ref{assump for the controller}, $\yref, \dot y_{\rm ref}, \psi, \dot\psi$ as well as $d$ and $F$ are bounded. Hence, $D_\phi(t-\omega)$ and thus also $M(t)$ are bounded on $[K\omega, (K+1)\omega]$. Further using \( \lambda := \inf_{t\ge -\omega} \psi(t) \), we may calculate that
\begin{align*}
    \tfrac12 \ddt e(t)^2 &= e(t) \dot e(t) = \frac{e(t)}{\psi(t-\omega)}\bigg(M(t) - c\alpha k \frac{e(t)}{1 - e(t)^2} - e(t) \dot \psi(t-\omega)\bigg) \\
    &\le -\frac{c\alpha k}{\lambda} \frac{e(t)^2}{1-e(t)^2} + \frac{\|M\|_\infty + \|\dot \psi\|_\infty}{\lambda},
\end{align*}
for all $t\in [0,\kappa)$, where we used $|e(t)|<1$. Let \( C := \tfrac{\|M\|_\infty + \|\dot \psi\|_\infty}{\lambda} \) and choose \( \varepsilon \in (0,1)\) so that
    \[
    -\frac{c\alpha k\varepsilon^2}{\lambda(1-\varepsilon^2)} + C < 0.
    \]
Now, assume there exists \( t_1 \in [K\omega, \kappa) \) such that \( |e(t_1)| > \varepsilon \). Define
    \[
    t_0 := \max\left\{ t \in [K\omega,t_1] \mid |e(t)| =  \varepsilon \right\}.
    \]
    On \([t_0,t_1]\), we have \( |e(t)| \in [\varepsilon,1) \), so $\frac{1}{1 - e(t)^2} \ge \frac{1}{1-\varepsilon^2}$.     The differential inequality becomes
    \[
    \tfrac{1}{2}\ddt e(t)^2 \le -\frac{c\alpha k\varepsilon^2}{\lambda(1-\varepsilon^2)} + C <0.
    \]
   Integrating from \( t_0 \) to \( t_1 \) we obtain
   \[
    \varepsilon^2 = e(t_0)^2 \ge e(t_1)^2 >  \varepsilon^2,
    \]
    a contradiction. Hence no such $t_1$ exists and 
    therefore, \( |e(t)| \leq \varepsilon \) for all \( t \in [K\omega, \kappa) \), thus the solution extends to $[K\omega, (K+1)\omega]$ with this uniform bound.

\textit{Step 5.} (Inductive continuation~-- remaining variables).
As shown in Step~4, $e$ is well-defined on $[K\omega,(K+1)\omega]$, thus $v$ and $I$ are well-defined on $[K\omega,(K+1)\omega]$, too. Similar to Step~2,  the input is given by $u(t) = v(t) + \frac{1}{c}z(t)$, which can be reformulated as the boundary condition $G\phi_\xi(0,t) - \frac{1}{c}\phi_t(0,t) = v(t)$. By the solution of the previous induction step, this boundary condition is satisfied at $t=K \omega$. Since $v\in W^{1,\infty}([K\omega,(K+1)\omega],\R)$, $\dom(F_e)=\R$ and  $W^{1,2}([0,\ell],\R)$ is dense in $L^2([0,\ell],\R)$ we have that $\overline{\dom(D_f)\cap\dom(B)}=X$,  thus it follows from Theorem~\ref{thm:open_loop_existence} that there exists a unique generalized solution \( \phi \in C([K\omega, (K+1)\omega], L^2([0,\ell],\R)) \)  with $\phi_t,\phi_\xi \in C([K\omega, (K+1)\omega], L^2([0,\ell],\R))$. Then $y$ is determined by the differential equation 
\[ J \dot{y}(t) = \frac{-G\Gamma}{c} \big( y(t) + c v(t-\omega) - D_\phi(t) \big) + F_e(y(t)), \]
and since $v(t-\omega), D_\phi(t), F_e(y(t))$ are bounded on $[K\omega,(K+1)\omega]$, existence of a solution $y$ with \( y \in W^{1,\infty}([K\omega, (K+1)\omega],\R) \) follows similar to Step~3. 

\textit{Step 6.} (Global extension and boundedness).
By induction, the solution extends to all intervals \( [0, N\omega] \), for any \( N \in \mathbb{N} \). Since $N$ can be arbitrarily large, the solution is global (i.e., exists for all \( t \ge 0 \)). The global boundedness is ensured by $|e(t)| \le \eps$ for $t\ge 0$ as shown in Step~4, and the fact that $\eps$ is independent of $N$. This also guarantees boundedness of all other signals $y, v, u, I, \phi$.
\end{proof}

\section{Simulations}\label{Sec:Sim}

We demonstrate the performance of the funnel controller~\eqref{eq:FC} applied to the drill string system~\eqref{drillstring} with zero initial conditions, that is  $\phi(\xi,t)=\phi_0(\xi)=0$ and $\phi_t(\xi,t)=v_0(\xi)=0$ . Let $\phi(\xi,t)$ be the displacement field and $\textbf{v}(\xi,t) = \partial_t \phi(\xi,t)$ the velocity. The control input $u(t)$ is applied at the top boundary $\xi = 0$ via the boundary condition $G \phi_\xi(0,t)=u(t)$, and the output $y(t) = \textbf{v}(\ell,t)$ is defined at the bottom boundary. The dynamics include distributed damping $F_d(\xi,v)$, which we choose to be a tangent inverse function specified in Table~\ref{table}, and nonlinear boundary dissipation at $\xi = \ell$ of the form
\[
F_e(\textbf{v}) = -\frac{A \textbf{v}}{\sqrt{\textbf{v}^2 + \varepsilon^2}} \left( 1 + h e^{- \frac{\sqrt{\textbf{v}^2 + \varepsilon^2}}{\Delta}} \right).
\]
The goal is to ensure, for the constant output reference $y_{\rm ref}(t) = 5.0$, the evolution of the corrected tracking error $w(t) = y(t) - y_{\rm ref}(t) + I(t+\omega)$ within the performance funnel defined by the funnel boundary function $\psi(t) = 8 e^{-t} + 0.1$, where $\omega = \ell/c = \ell\sqrt{\frac{\rho}{G}}$.  The simulation parameters are summarized in Table~\ref{table}. 

The funnel control~\eqref{eq:FC} includes the transient shaping term $\left(\hat{v}-\frac{ke(0)}{1-e(0)^2}\right)p(t)$ which enforces a smooth initialization of $v$ at $t=0$. We choose
\[
p(t) =
\begin{cases}
\exp\!\left(-\dfrac{1}{\tau(1-\tau)}\right), & \text{if } 0 < t < T_{\text{shaping}}, \\[8pt]
0, & \text{otherwise,}
\end{cases} \qquad
\text{where } \tau = \dfrac{t}{T_{\text{shaping}}},\quad T_{\text{shaping}}=0.5s.
\]
\begin{table}[!ht]
\centering
\label{tab:params}
\begin{tabular}{ll}
\toprule
\textbf{Parameter} & \textbf{Value / Description} \\
\midrule
$\ell$ & $1.0$ and $10.0$  \\
$N$ & $150$ (grid points), $d\xi = 0.02$ \\
$\rho(\xi)$ & $1.0$ (uniform density) \\
$G(\xi)$ & $1.0$ (uniform stiffness) \\
$F_d(\xi,v)$ & $0.3\cdot arctan(v)$ \\
$\Gamma$ & $1.0$ (damping at boundary $\xi = \ell$) \\
$J$ & $1.0$ (inertia at $\xi = \ell$) \\
$k$, $\alpha$, $\beta$ & $1.0$ (funnel control parameters) \\
$A$, $\varepsilon$, $h$, $\Delta$ & $1.0$, $0.001$, $0.1$, $0.1$ (nonlinear damping parameters) \\
$y_{\rm ref}(t)$ & $5.0$ (constant reference signal) \\
$\psi(t)$ & $8 e^{-t} + 0.1$ (funnel boundary) \\
$\hat{v}$     &  $1$\\
$t \in [0, 10]$ & Simulated time interval \\
\bottomrule
\end{tabular}
\caption{Simulation parameters.}\label{table}
\end{table}

For the simulation, the PDE in~\eqref{drillstring} is converted to a system of ODEs using a second-order finite difference scheme with $N = 51$ spatial points over the domain $[0, \ell]$ with $\ell = 1.0$, yielding a step size of $d\xi = 0.02$. The ODE system is simulated using \texttt{solve\_ivp} with method \texttt{RK23} and 500 evaluation points over $[0,10]$. The delay terms $y(t - \omega)$, $I(t - \omega)$, and $\psi(t - \omega)$ are computed using linear interpolation over a history buffer populated with 200 pre-initialized values. The control input $u(t)$ is realized via a ghost-point method~\cite{tseng2003ghost}, where $\phi_{-1}$ is the ghost point introduced outside the domain to impose the boundary condition through a finite-difference stencil:
\[
\phi_{-1} = \phi_0 - \frac{dx}{G} u(t),
\]
so that the top boundary incorporates the effect of the control input. 

The simulation results are depicted in Figure~\ref{fig:imageN=150}. The output $y(t)$ closely tracks the constant desired value $y_{\rm ref}(t) = 5.0$. After an initial transient, tracking is achieved with high precision. The transient is well-contained due to the wide initial funnel. The error remains strictly within the corrected funnel bounds throughout the simulation. The controller dynamically adjusts $I(t)$ to maintain feasibility even as $\psi(t)$ decreases rapidly over time. This confirms feasibility of the control scheme.  

\begin{figure}
    \centering
    \includegraphics[width=0.9\linewidth]{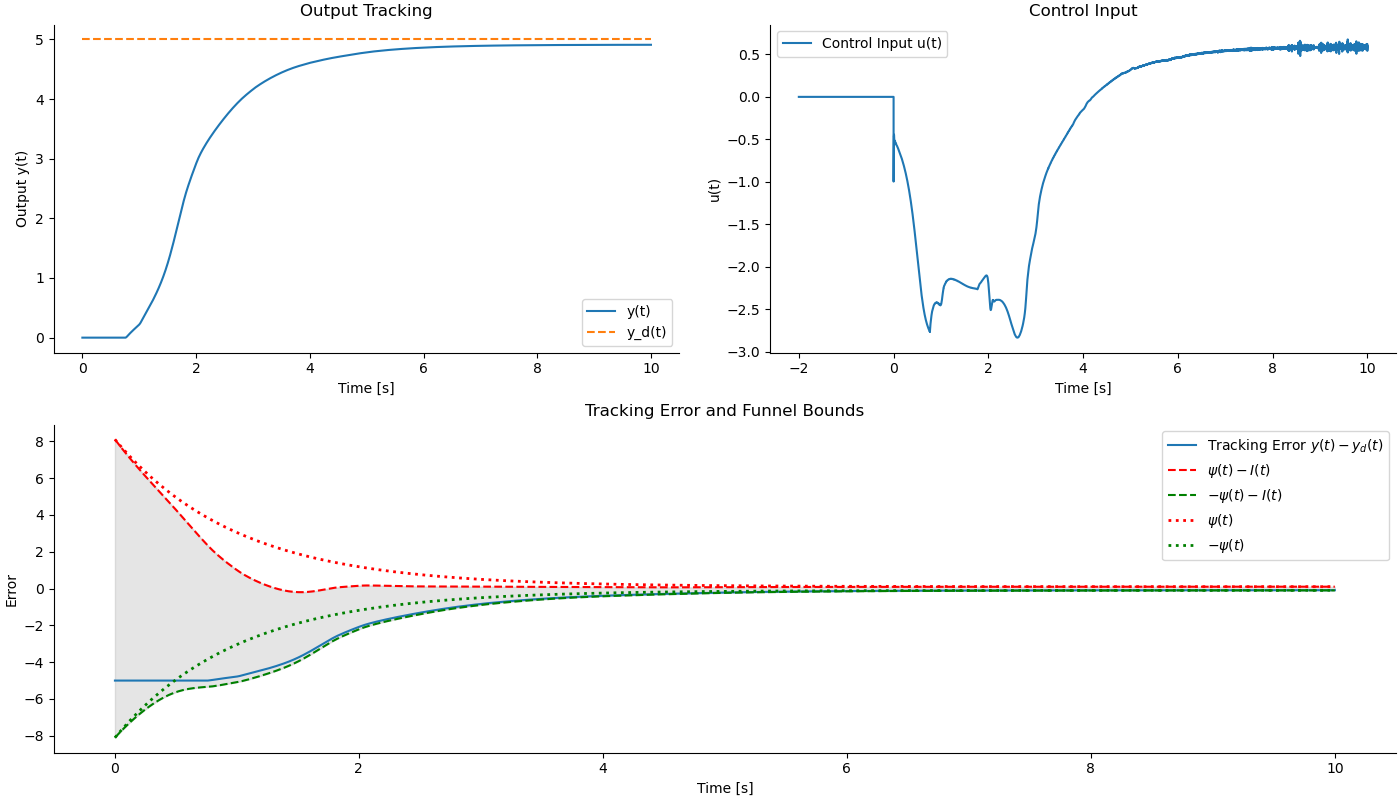}
    \caption{simulation results for $\ell=1$}
    \label{fig:imageN=150}
\end{figure}
\begin{figure}
    \centering
    \includegraphics[width=0.9\linewidth]{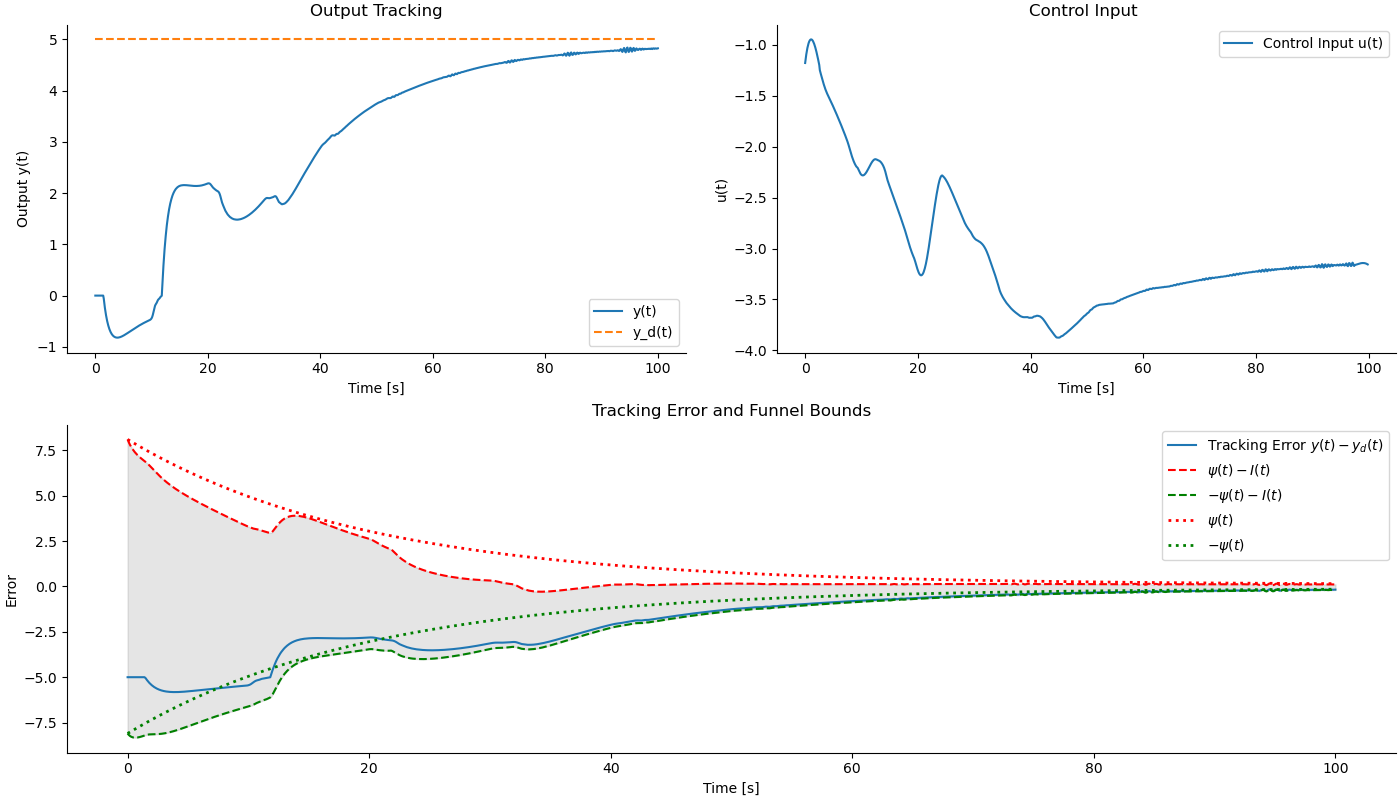}
    \caption{simulation results for $\ell=10$}
    \label{fig:placeholder}
\end{figure}

Figure \ref{fig:placeholder} depicts the plots for $\ell=10$ over the time interval $[0,100]$,  where the parameters used are  $k_z = 1.0$, $\alpha = 1.2$,  $\beta = 1.2$,  $A = 1.0$ , $\varepsilon = 0.001$,  $h = 0.1$ and $\Delta = 0.1$. The remaining parameters are the same as in Table~\ref{table}. The depicted results exhibit a similar controller performance as in the case $\ell=1$, suggesting the practical relevance of the control scheme even for large drill string lengths.

\section{Conclusion}\label{Sec:Concl}
In this work, we studied the output tracking problem for a vertically driven drill string modeled by a nonlinear boundary-coupled PDE-ODE system, described using a non-homogeneous wave equation with spatially varying coefficients $\rho$ and $G$ as well as distributed and end-point damping. We studied the solvability of the system by first formulating it as an abstract  boundary value problem involving nonlinear set-valued operators on an appropriate  Hilbert space and then used the theory of maximal monotone operators to establish existence of generalized and strong solutions under the respective regularity of the input function. For the application of funnel control we restricted ourselves to the simpler case of constant physical coefficients $\rho$ and $G$ as well as single-valued damping functions in order to provide a rigorous feasibility proof for the proposed controller. The designed funnel controller incorporates delayed system states and an integral term, guaranteeing that the closed-loop system admits a global solution, the control input remains bounded, and the (possibly corrected) tracking error stays strictly within the prescribed performance funnel, as formalized in Theorem~\ref{Thm:funnel}. Simulation results validate the effectiveness of the controller in enforcing the drill bit angular velocity to follow a dynamically adjusted reference trajectory while respecting the funnel bounds.


\backmatter

\section*{Funding}

This work was funded by the Deutsche Forschungsgemeinschaft (DFG, German Research Foundation) -- Project-ID 362536361.

\section*{Declarations}

\textbf{Competing Interests} The authors declare no competing interests.

\bibliography{References}

@book{Bar2010,
author = {Barbu, Viorel},
year = {2010},
title = {Nonlinear Differential Equations of Monotone Type in Banach Spaces},
publisher={Springer},
address={New York, NY},
url = {https://link.springer.com/book/10.1007/978-1-4419-5542-5}
}

@article{BALANOV2003,
title = {Bifurcation analysis of a neutral delay differential equation modelling the torsional motion of a driven drill-string},
journal = {Chaos, Solitons \& Fractals},
volume = {15},
number = {2},
pages = {381-394},
year = {2003},
issn = {0960-0779},
doi = {10.1016/S0960-0779(02)00105-4},
author = {A.G. Balanov and N.B. Janson and P.V.E. McClintock and R.W. Tucker and C.H.T. Wang}
}

@article{CRUZNETO2023,
title = {Control of drill string torsional vibrations using optimal static output feedback},
journal = {Control Engineering Practice},
volume = {130},
pages = {105-366},
year = {2023},
issn = {0967-0661},
doi = {10.1016/j.conengprac.2022.105366},
author = {H.J. {Cruz Neto} and M.A. Trindade}
}

@article{AndAhm2003,
author = {Christoforou, Andreas and Yigit, Ahmet},
year = {2003},
pages = {1029-1045},
title = {Fully coupled vibrations of actively controlled drillstrings},
volume = {267},
journal = {Journal of Sound and Vibration},
doi = {10.1016/S0022-460X(03)00359-6}
}

@article{BerIlcRya2021,
title = {Funnel control of nonlinear systems},
journal ={Mathematics of Control, Signals and Systems},
volume = {33},
pages = {151-194},
year = {2021},
author = {Berger, Thomas and Ilchmann, Achim and Ryan, Eugene P},
doi = {10.1007/s00498-021-00277-z}
}

@article{Berger2018,
  title={Funnel control for nonlinear systems with known strict relative degree},
  author={Berger, Thomas and L{\^e}, Huy Ho{\`a}ng and Reis, Timo},
  journal={Automatica},
  volume={87},
  pages={345--357},
  year={2018},
  publisher={Elsevier},
  doi = {10.1016/j.automatica.2017.10.017}
}

@article{Ber2020,
title = {Funnel control in the presence of infinite-dimensional internal dynamics},
journal = {Systems \& Control Letters},
volume = {139},
pages = {104678},
year = {2020},
author = {Thomas Berger and Marc Puche and Felix L. Schwenninger},
doi = {10.1016/j.sysconle.2020.104678}
}

@article{SagMegKrsRou2013,
  author  = {Sagert, C. and {Di Meglio}, F. and Krsti{\'c}, M. and Rouchon, P.},
  title   = {Backstepping and Flatness Approaches for Stabilization of the Stick-Slip Phenomenon for Drilling},
  journal = {IFAC Proceedings Volumes},
  volume  = {46},
  number  = {2},
  pages   = {779--784},
  year    = {2013},
  doi     = {10.3182/20130204-3-FR-2033.00126}
}

@article{IlcRyaSan2002,
author = {Ilchmann, Achim and Ryan, E.P. and Sangwin, Christopher},
journal = {ESAIM: Control, Optimisation and Calculus of Variations},
pages = {471--493},
volume = {7},
year = {2002},
title = {Tracking with prescribed transient behaviour},
doi = {10.1051/cocv:2002064}
}

@book{evans2022partial,
  title={Partial differential equations},
  author={Evans, Lawrence C},
  volume={19},
  year={2022},
  publisher={American Mathematical Society},
  url = {https://bookstore.ams.org/gsm-19-r/}
}

@book{folland2020introduction,
  title={Introduction to partial differential equations},
  author={Folland, Gerald B},
  year={2020},
  publisher={Princeton university press},
  url = {https://press.princeton.edu/books/ebook/9780691213033/introduction-to-partial-differential-equations-pdf-0}
}

@article{IlcTre2004,
title = {Input constrained funnel control with applications to chemical reactor models},
journal = {Systems \& Control Letters},
volume = {53},
number = {5},
pages = {361-375},
year = {2004},
author = {Achim Ilchmann and Stephan Trenn},
doi = {10.1016/j.sysconle.2004.05.014}
}

@article{BerRei2014,
title = {Zero dynamics and funnel control for linear electrical circuits},
journal = {Journal of the Franklin Institute},
volume = {351},
number = {11},
pages = {5099-5132},
year = {2014},
author = {Thomas Berger and Timo Reis},
doi = {10.1016/j.jfranklin.2014.08.006}
}

@INPROCEEDINGS{Hac2014,
  author={Hackl, C. M.},
  booktitle={2014 IEEE Conference on Control Applications (CCA)}, 
  title={Funnel control for wind turbine systems}, 
  year={2014},
  volume={},
  number={},
  pages={1377-1382},
  doi = {10.1109/CCA.2014.6981516}
  }

@article{tian2020research,
  author  = {Tian, Jialin and Wei, Lai and Yang, Lin and Dai, Liming and Zhang, Tangjia and Liu, He},
  title   = {Research and Experimental Analysis of Drill String Dynamics Characteristics and Stick-Slip Reduction Mechanism},
  journal = {Journal of Mechanical Science and Technology},
  volume  = {34},
  number  = {3},
  pages   = {977--986},
  year    = {2020},
  doi     = {10.1007/s12206-020-0201-9}}

@book{loomis1968advanced,
  title={Advanced calculus},
  author={Loomis, Lynn Harold and Sternberg, Shlomo},
  year={1968},
  publisher={World Scientific}, 
  url = {https://people.math.harvard.edu/~shlomo/docs/Advanced_Calculus.pdf}
}

@article{TerAndVin2020,
  author  = {Terrand-Jeanne, Alexandre and Andrieu, Vincent and Tayakout-Fayolle, M{\'e}laz and {Dos Santos Martins}, Val{\'e}rie},
  title   = {Regulation of Inhomogeneous Drilling Model With a {P-I} Controller},
  journal = {IEEE Transactions on Automatic Control},
  volume  = {65},
  number  = {1},
  pages   = {58--71},
  year    = {2020},
  doi     = {10.1109/TAC.2019.2907792}
}

@article{Chitour2023,
  title={Lyapunov functions for linear damped wave equations in one-dimensional space with dynamic boundary conditions},
  author={Yacine Chitour and Hoai-Minh Nguyen and Christophe Roman},
  journal={Automatica},
  year={2023},
  volume={167},
  pages={111754},
  url={https://api.semanticscholar.org/CorpusID:258461068}
}

@article{athanasiou2020simulation,
  author  = {Athanasiou, Panagiotis and Hadi, Yaser},
  title   = {Simulation of Oil Well Drilling System Using Distributed--Lumped Modelling Technique},
  journal = {Modelling},
  volume  = {1},
  number  = {2},
  pages   = {175--197},
  year    = {2020},
  doi     = {10.3390/modelling1020011}
}

@article{sharma2024review,
  author  = {Sharma, Aditya and Abid, Khizar and Srivastava, Saket and Velasquez, Andres Felipe Baena and Teodoriu, Catalin},
  title   = {A Review of Torsional Vibration Mitigation Techniques Using Active Control and Machine Learning Strategies},
  journal = {Petroleum},
  volume  = {10},
  number  = {3},
  pages   = {411--426},
  year    = {2024},
  doi     = {10.1016/j.petlm.2023.09.007}
}

@article{moharrami2021nonlinear,
  author  = {Moharrami, Mohammad Javad and de Arruda Martins, Cl{\'o}vis and Shiri, Hodjat},
  title   = {Nonlinear Integrated Dynamic Analysis of Drill Strings Under Stick-Slip Vibration},
  journal = {Applied Ocean Research},
  volume  = {108},
  pages   = {102521},
  year    = {2021},
  doi     = {10.1016/j.apor.2020.102521}
}

@article{deutscher2018output,
title = {Output feedback control of general linear heterodirectional hyperbolic {ODE}–{PDE}–{ODE} systems},
journal = {Automatica},
volume = {95},
pages = {472-480},
year = {2018},
doi = {10.1016/j.automatica.2018.06.021},
author = {Joachim Deutscher and Nicole Gehring and Richard Kern},
}

@article{Deutscher03102019,
author = {J. Deutscher and N. Gehring and R. Kern},
title = {Output feedback control of general linear heterodirectional hyperbolic {PDE}-{ODE} systems with spatially-varying coefficients},
journal = {International Journal of Control},
volume = {92},
number = {10},
pages = {2274--2290},
year = {2019},
publisher = {Taylor \& Francis},
doi = {10.1080/00207179.2018.1436770}

}

@article{BergBika25,
  title={Prescribed performance control of uncertain higher-order nonlinear systems in the presence of delays},
  author={Thimas Berger and Lampros N. Bikas and Jan Hachmeister and George A. Rovithakis},
  journal={arXiv preprint arXiv:2509.08601},
  year={2025}
}

@incollection{Ilchmann2012DecentralizedTO,
  author    = {Ilchmann, Achim},
  title     = {Decentralized tracking of interconnected systems},
  booktitle = {Mathematical System Theory -- Festschrift in Honor of Uwe Helmke on the Occasion of his Sixtieth Birthday},
  editor    = {H{\"u}per, Knut and Trumpf, Jochen},
  publisher = {CreateSpace},
  year      = {2013},
  pages     = {229--245},
  isbn      = {9781470044008}
}

@article{BergPuch22,
	Author = {Berger, Thomas and Puche, Marc and Schwenninger, Felix L.},
	Title = {Funnel control for a moving water tank},
    journal = {Automatica},
    volume = 135,
    pages = {109999},
	Year = 2022,
    doi = {10.1016/j.automatica.2021.109999}
    
}

@article{BergIlch25, 
    title={Funnel control - a survey}, 
    author={Thomas Berger and Achim Ilchmann and Eugene P. Ryan},
    year={2025},
    journal={Annual Reviews in Control},
    volume = 60, 
    pages = {101024},
    doi = {10.1016/j.arcontrol.2025.101024}
}

@BOOK{Hack17,
   AUTHOR    = {Hackl, Christoph M.},
   YEAR      = 2017,
   TITLE     = {Non-identifier Based Adaptive Control in Mechatronics--Theory and Application},
   PUBLISHER = {Springer},
   Address   = {Cham, Switzerland},
   Series    = {Lecture Notes in Control and Information Sciences},
   Volume    = 466,
   url = {https://link.springer.com/book/10.1007/978-3-319-55036-7}
}

@article{BergDrue21,
	Author = {Berger, Thomas and Drücker, Svenja and Lanza, Lukas and Reis, Timo and Seifried, Robert},
	Journal = {Nonlinear Dynamics},
	Title = {Tracking control for underactuated non-minimum phase multibody systems},
	Year = 2021,
    Volume = 104,
    Issue  = 4,
    Pages = {3671--3699},
    doi = {10.1007/s11071-021-06458-4}
}

@article{DrueLanz24,
	Author = {Drücker, Svenja and Lanza, Lukas and Berger, Thomas and Reis, Timo and Seifried, Robert},
	Journal = {Multibody System Dynamics},
	Title = {Experimental validation for the combination of funnel control with a feedforward control strategy},
	Year = 2024,
    Volume = 63,
    Pages = {105--123},
    doi = {10.1007/s11044-024-09976-2}
}

@article{Rabinowicz1956,
  author  = {Rabinowicz, Ernest},
  title   = {Stick and Slip},
  journal = {Scientific American},
  volume  = {194},
  number  = {5},
  pages   = {109--119},
  year    = {1956},
  doi     = {10.1038/scientificamerican0556-109}
}

@article{Rockafellar1970,
  author  = {R. Tyrrell Rockafellar},
  title   = {On the maximal monotonicity of subdifferential mappings},
  journal = {Pacific Journal of Mathematics},
  volume  = {33},
  year    = {1970},
  pages   = {209--216},
  doi = {10.2140/pjm.1970.33.209
}
}

@article{Tolstonogov2021,
  author  = {A. A. Tolstonogov},
  title   = {Maximal Monotonicity of a {N}emytskii Operator},
  journal = {Funct. Anal. Appl.},
  year    = {2021},
  volume  = {55},
  number  = {3},
  pages   = {217--225},
  doi     = {10.1134/S0016266321030047}
}

@article{Evans1977,
  author  = {Evans, Lawrence C.},
  title   = {Nonlinear evolution equations in an arbitrary {B}anach space},
  journal = {Israel J. Math.},
  volume  = {26},
  number  = {1},
  year    = {1977},
  pages   = {1--42},
  doi     = {10.1007/BF03007654}
}

@article{tseng2003ghost,
  title={A ghost-cell immersed boundary method for flow in complex geometry},
  author={Tseng, Yu-Heng and Ferziger, Joel H},
  journal={Journal of computational physics},
  volume={192},
  number={2},
  pages={593--623},
  year={2003},
  doi = {10.1016/j.jcp.2003.07.024}
}

@article{Fattorini1968,
  author  = {H. O. Fattorini},
  title   = {Boundary Control Systems},
  journal = {SIAM Journal on Control},
  volume  = {6},
  number  = {3},
  pages   = {349--385},
  year    = {1968},
  doi     = {10.1137/0306025}
}

@incollection{lhachemi:hal-05038849,
  TITLE = {{Nonlinear Control for Infinite-Dimensional Systems}},
  AUTHOR = {Lhachemi, Hugo and Prieur, Christophe},
  BOOKTITLE = {{Reference Module in Materials Science and Materials Engineering}},
  PUBLISHER = {{Elsevier}},
  YEAR = {2025},
  DOI = {10.1016/B978-0-443-14081-5.00150-1},
  HAL_ID = {hal-05038849},
  HAL_VERSION = {v1},
}

@article{favini2001nonlinear,
  title={Nonlinear boundary conditions for nonlinear second order differential operators on C [0, 1]},
  author={Favini, ANGELO and Goldstein, G Ruiz and Goldstein, JEROME A and Romanelli, Silvia},
  journal={Archiv der Mathematik},
  volume={76},
  number={5},
  pages={391--400},
  year={2001},
  publisher={Springer},
  doi = {10.1007/PL00000449}
  
}

@article{avalos2001well,
  title={Well-posedness of a structural acoustics control model with point observation of the pressure},
  author={Avalos, George and Lasiecka, Irena and Rebarber, Richard},
  journal={Journal of Differential Equations},
  volume={173},
  number={1},
  pages={40--78},
  year={2001},
  publisher={Elsevier},
  doi = {10.1006/jdeq.2000.3938}
}

@article{graber2012existence,
  title={Existence and asymptotic behavior of the wave equation with dynamic boundary conditions},
  author={Graber, Philip Jameson and Said-Houari, Belkacem},
  journal={Applied Mathematics \& Optimization},
  volume={66},
  number={1},
  pages={81--122},
  year={2012},
  publisher={Springer},
  doi={10.1007/s00245-012-9165-1}
}

@book{BauschkeCombettes2017,
  author    = {Bauschke, Heinz H. and Combettes, Patrick L.},
  title     = {Convex Analysis and Monotone Operator Theory in Hilbert Spaces},
  edition   = {2},
  series    = {CMS Books in Mathematics},
  publisher = {Springer},
  address   = {Cham},
  year      = {2017},
  doi       = {10.1007/978-3-319-48311-5}
}

\end{document}